\begin{document}


\title{Kramers' law:\\ Validity, derivations and generalisations}
\author{Nils Berglund} 
\date{June 28, 2011. Updated, January 22, 2013}   

\maketitle

\begin{abstract}
Kramers' law describes the mean transition time of an overdamped
Brownian particle between local minima in a potential landscape. We review
different approaches that have been followed to obtain a mathematically rigorous
proof of this formula. We also discuss some generalisations, and a case in which
Kramers' law is not valid. This review is written for both mathematicians and
theoretical physicists, and endeavours to link concepts and terminology from
both fields. 
\end{abstract}



\leftline{\small 2000 {\it Mathematical Subject Classification.\/} 
58J65, 60F10, (primary), 60J45, 34E20 (secondary)}
\noindent{\small{\it Keywords and phrases.\/}
Arrhenius' law,
Kramers' law,
metastability,
large deviations, 
Wentzell-Freidlin theory, 
WKB theory,
potential theory, 
capacity,
Witten Laplacian,
cycling.
}


\section{Introduction}
\label{sec_intro}

The overdamped motion of a Brownian particle in a potential $V$ is governed by
a first-order Langevin (or Smoluchowski) equation, usually written in the
physics literature as 
\begin{equation}
 \label{in02}
\dot x = -\nabla V(x) + \sqrt{2\eps}\,\xi_t\;, 
\end{equation} 
where $\xi_t$ denotes zero-mean, delta-correlated Gaussian white noise. 
We will rather adopt the mathematician's notation, and write~\eqref{in02} as
the It\^o stochastic differential equation 
\begin{equation}
 \label{in01}
\6x_t = -\nabla V(x_t)\6t + \sqrt{2\eps} \6W_t\;, 
\end{equation} 
where $W_t$ denotes $d$-dimensional Brownian motion. 
The potential is a function $V:\R^d\to\R$, which we will always assume to be
smooth and growing sufficiently fast at infinity. 

The fact that the drift term in~\eqref{in01} has gradient form entails two
important properties, which greatly simplify the analysis: 
\begin{enum}
\item	There is an invariant probability measure, with the explicit expression 
\begin{equation}
 \label{in03}
\mu(\6x) = \frac1Z \e^{-V(x)/\eps} \6x\;, 
\end{equation} 
where $Z$ is the normalisation constant.

\item	The system is \defwd{reversible}\/ with respect to the invariant measure
$\mu$, that is, the transition probability density satisfies the \defwd{detailed
balance condition} 
\begin{equation}
 \label{in04}
p(y,t|x,0) \e^{-V(x)/\eps} = p(x,t|y,0) \e^{-V(y)/\eps}\;. 
\end{equation} 
\end{enum}

The main question we are interested in is the following. Assume that the
potential $V$ has several (meaning at least two) local minima. How long does
the Brownian particle take to go from one local minimum to another one?

To be more precise, let $x^\star$ and $y^\star$ be two local minima of $V$, and
let $\cB_\delta(y^\star)$ be the ball of radius $\delta$ centred in $y^\star$,
where $\delta$ is a small positive constant (which may possibly depend on
$\eps$). We are interested in characterising the first-hitting time of this
ball, defined as the random variable 
\begin{equation}
 \label{in05}
\tau^{x^\star}_{y^\star} = \inf\setsuch{t>0}{x_t\in\cB_\delta(y^\star)}
\qquad
\text{where $x_0=x^\star$\;.}
\end{equation} 
The two points $x^\star$ and $y^\star$ being local minima, the potential along
any continuous path $\gamma$ from $x^\star$ to $y^\star$ must increase and
decrease again, at least once but possibly several times. We can determine the
maximal value of $V$ along such a path, and then minimise this value over all
continuous paths from $x^\star$ to $y^\star$. This defines a
\defwd{communication height}
\begin{equation}
 \label{in06}
H(x^\star,y^\star) = \inf_{\gamma: x^\star\to y^\star} 
\biggpar{\sup_{z\in\gamma} V(z)}\;. 
\end{equation} 
Although there are many paths realising the infimum in~\eqref{in06}, the
communication height is generically reached at a unique point $z^\star$, which
we will call the \defwd{relevant saddle} between $x^\star$ and~$y^\star$. In
that case, $H(x^\star,y^\star)=V(z^\star)$ (see \figref{fig_doublewell}). One
can show that generically, $z^\star$ is a critical point of index $1$ of the
potential, that is, when seen from $z^\star$ the potential decreases in one
direction and increases in the other $d-1$ directions. This translates
mathematically into $\nabla V(z^\star)=0$ and the Hessian $\hessian V(z^\star)$
having exactly one strictly negative and $d-1$ strictly positive eigenvalues. 

\begin{figure}
\centerline{\includegraphics*[clip=true,height=50mm]{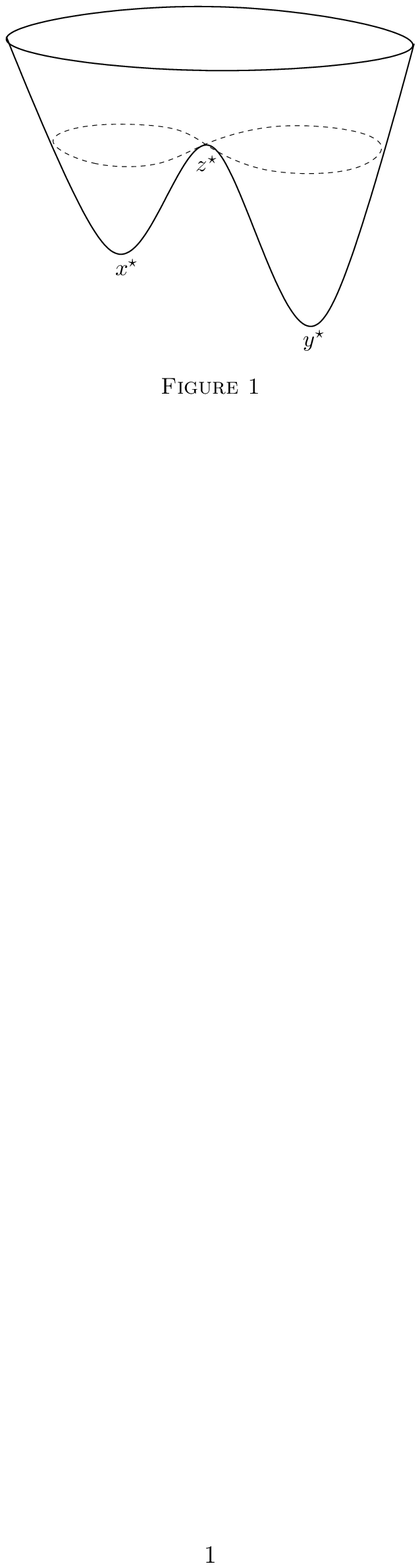}}
 \vspace{2mm}
\caption[]{Graph of a potential $V$ in dimension $d=2$, with two local minima
$x^\star$ and $y^\star$ and saddle $z^\star$. 
}
\label{fig_doublewell}
\end{figure}

In order to simplify the presentation, we will state the main results in the
case of a double-well potential, meaning that $V$ has exactly two local minima
$x^\star$ and $y^\star$, separated by a unique saddle $z^\star$
(\figref{fig_doublewell}), henceforth referred to as \lq\lq the double-well
situation\rq\rq. The Kramers law has been extended to potentials with
more than two local minima, and we will comment on its form in these cases in
Section~\ref{ssec_potential} below. 

In the context of chemical reaction rates, a relation for the mean transition
time $\tau^{x^\star}_{y^\star}$ was first proposed by van t'Hoff, and later
physically justified
by Arrhenius~\cite{Arrhenius}. It reads 
\begin{equation}
 \label{in07} 
\expec{\tau^{x^\star}_{y^\star}} \simeq C \e^{[V(z^\star)-V(x^\star)]/\eps}\;.
\end{equation} 
The Eyring--Kramers law~\cite{Eyring,Kramers} is a refinement of Arrhenius'
law, as it gives an approximate value of the prefactor $C$ in~\eqref{in07}. 
Namely, in the one-dimensional case $d=1$, it reads 
\begin{equation}
 \label{in08}
 \expec{\tau^{x^\star}_{y^\star}} \simeq
\frac{2\pi}{\sqrt{V''(x^\star)\abs{V''(z^\star)}}}
\e^{[V(z^\star)-V(x^\star)]/\eps}\;,
\end{equation} 
that is, the prefactor depends on the curvatures of the potential at the
starting minimum $x^\star$ and at the saddle $z^\star$. Smaller curvatures lead
to longer transition times. 

In the multidimensional case $d\geqs 2$, the Eyring--Kramers law reads 
\begin{equation}
 \label{in09}
  \expec{\tau^{x^\star}_{y^\star}} \simeq \frac{2\pi}{\abs{\lambda_1(z^\star)}}
\sqrt{\frac{\abs{\det(\hessian V(z^\star))}}{\det(\hessian V(x^\star))}}
\e^{[V(z^\star)-V(x^\star)]/\eps}\;, 
\end{equation} 
where $\lambda_1(z^\star)$ is the single negative eigenvalue of the Hessian
$\hessian V(z^\star)$. If we denote the eigenvalues of $\hessian V(z^\star)$ by
$\lambda_1(z^\star)<0<\lambda_2(z^\star)\leqs\dots\leqs\lambda_d(z^\star)$, and
those of $\hessian V(x^\star)$ by
$0<\lambda_1(x^\star)\leqs\dots\leqs\lambda_d(x^\star)$, the
relation~\eqref{in10} can be rewritten as 
\begin{equation}
 \label{in10}
   \expec{\tau^{x^\star}_{y^\star}} \simeq 2\pi
\sqrt{\frac{\lambda_2(z^\star)\dots\lambda_d(z^\star)}{\abs{\lambda_1(z^\star)}
\lambda_1(x^\star)\dots\lambda_d(x^\star) } }
\e^{[V(z^\star)-V(x^\star)]/\eps}\;,
\end{equation} 
which indeed reduces to~\eqref{in08} in the case $d=1$. Notice that for
$d\geqs2$, smaller curvatures at the saddle in the stable directions (a \lq\lq
broader mountain pass\rq\rq) decrease the mean transition time, while a smaller
curvature in the unstable direction increases it. 

The question we will address is whether, under which assumptions and for which
meaning of the symbol $\simeq$ the Eyring--Kramers law~\eqref{in09} is true. 
Answering this question has taken a surprisingly long time, a full proof
of~\eqref{in09} having been obtained only in 2004~\cite{BEGK}. 

In the sequel, we will present several approaches towards a rigorous proof of
the Arrhenius and Eyring--Kramers laws. In Section~\ref{sec_ld}, we present the
approach based on the theory of large deviations, which allows to prove
Arrhenius' law for more general than gradient systems, but fails to control the
prefactor. In Section~\ref{sec_analytic}, we review different analytical
approaches, two of which yield a full proof of~\eqref{in09}. Finally, in
Section~\ref{sec_gen}, we discuss some situations in which the classical
Eyring--Kramers law does not apply, but either admits a generalisation, or has
to be replaced by a different expression. 

\subsection*{Acknowledgements:}
This review is based on a talk given at the meeting \lq\lq Inhomogeneous Random
Systems\rq\rq at Institut Henri Poincar\'e, Paris, on Januray 26, 2011. It is a
pleasure to thank Christian Maes for inviting me, and Fran\c cois Dunlop, 
Thierry Gobron and Ellen Saada for organising the meeting. I'm also grateful to
Barbara Gentz for numerous discussions and useful comments on the manuscript,
and to Aur\'elien Alvarez for sharing his knowledge of Hodge theory. 

\newpage


\section{Large deviations and Arrhenius' law}
\label{sec_ld}

The theory of large deviations has applications in many fields of
probability~\cite{DZ,DS}. It allows in particular to give a mathematically
rigorous framework to what is known in physics as the path-integral approach,
for a general class of stochastic differential equations of the form
\begin{equation}
 \label{ld01}
\6x_t = f(x_t)\6t + \sqrt{2\eps}\6W_t\;, 
\end{equation} 
where $f$ need not be equal to the gradient of a potential $V$ (it is even 
possible to consider an $x$-dependent diffusion coefficient
$\sqrt{2\eps}\,g(x_t)\6W_t$). In this
context, a \defwd{large-deviation principle}\/ is a relation stating that for
small $\eps$, the probability of sample paths being close to a function
$\varphi(t)$ behaves like 
\begin{equation}
 \label{ld02}
\bigprob{x_t\simeq\varphi(t), 0\leqs t\leqs T} 
\simeq \e^{-I(\varphi)/2\eps}
\end{equation} 
(see~\eqref{ld04} below for a mathematically precise formulation). The quantity
$I(\varphi)=I_{[0,T]}(\varphi)$ is called \defwd{rate function}\/ or
\defwd{action functional}. Its
expression was determined by Schilder~\cite{Schilder1966} in the case $f=0$ of
Brownian motion, using the Cameron--Martin--Girsanov formula. Schilder's result
has been extended to general equations of the form~\eqref{ld01} by  
Wentzell and Freidlin~\cite{VF70}, who showed that 
\begin{equation}
 \label{ld03}
I(\varphi) = \frac12 \int_0^T \norm{\dot\varphi(t) - f(\varphi(t))}^2 \6t\;. 
\end{equation} 
Observe that $I(\varphi)$ is nonnegative, and vanishes if and only if
$\varphi(t)$ is a solution of the deterministic equation
$\dot\varphi=f(\varphi)$. One may think of the rate function 
as representing the cost of tracking the function $\varphi$ rather than
following the deterministic dynamics. 

A precise formulation of~\eqref{ld02} is that for any set $\Gamma$ of paths
$\varphi:[0,T]\to\R^d$, one has 
\begin{equation}
 \label{ld04}
-\inf_{\Gamma^\circ} I 
\leqs \liminf_{\eps\to0} 2\eps \log \bigprob{(x_t)\in\Gamma} 
\leqs \limsup_{\eps\to0} 2\eps \log \bigprob{(x_t)\in\Gamma} 
\leqs -\inf_{\overline\Gamma} I\;.
\end{equation}
For sufficiently well-behaved sets of paths $\Gamma$, the infimum of the rate
function over the interior $\Gamma^\circ$ and the closure $\overline\Gamma$
coincide, and thus 
\begin{equation}
 \label{ld05}
 \lim_{\eps\to0} 2\eps \log \bigprob{(x_t)\in\Gamma}
= -\inf_{\Gamma} I \;.
\end{equation} 
Thus roughly speaking, we can write
$\prob{(x_t)\in\Gamma}\simeq\e^{-\inf_{\Gamma} I/2\eps}$, but we should keep
in mind that this is only true in the sense of logarithmic
equivalence~\eqref{ld05}. 

\begin{remark}
\label{remark_ldp}
The large-deviation principle~\eqref{ld04} can be considered as an
infinite-dimen\-sional version of Laplace's method. In the finite-dimensional
case of functions $w:\R^d\to\R$, Laplace's method yields 
\begin{equation}
 \label{ld06}
\lim_{\eps\to0} 2\eps \log 
\int_\Gamma \e^{-w(x)/2\eps}\6x 
= -\inf_\Gamma w\;,
\end{equation} 
and also provides an asymptotic expansion for the prefactor $C(\eps)$ such that
\begin{equation}
 \label{ld06B}
\int_\Gamma \e^{-w(x)/2\eps}\6x 
= C(\eps) \e^{-\inf_\Gamma w/2\eps}\;.
\end{equation}  
This approach can be extended formally to the infinite-dimensional case, where
it yields to a Hamilton--Jacobi equation, which is often used to derive
subexponential corrections to large-deviation results via a (see
e.g.~\cite{MS3}). We are not aware, however, of this procedure having been
justified mathematically. 
\end{remark}

\begin{figure}
\centerline{\includegraphics*[clip=true,height=50mm]{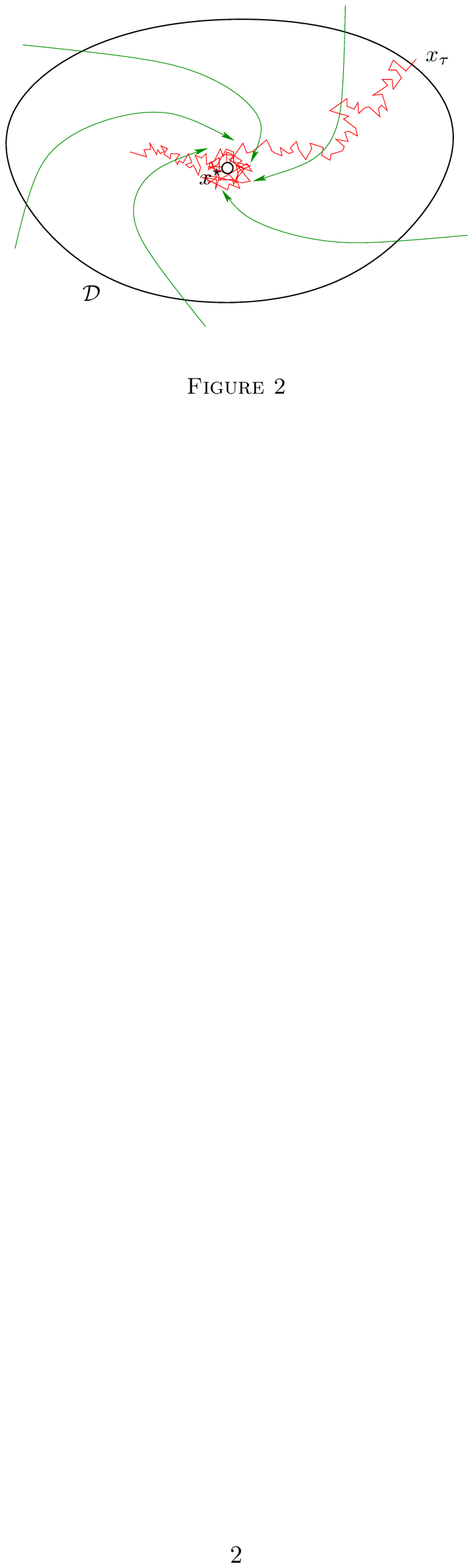}}
 \vspace{2mm}
\caption[]{The setting of Theorems~\ref{thm_FW} and~\ref{thm_Day}. The domain
$\cD$ contains a unique stable equilibrium point $x^\star$, and all orbits of
the deterministic system $\dot{x}=f(x)$ starting in $\overline\cD$ converge to
$x^\star$. 
}
\label{fig_stable}
\end{figure}

Let us now explain how the large-deviation principle~\eqref{ld04} can be
used to prove Arrhenius' law. Let $x^\star$ be a stable equilibrium point 
of the deterministic system $\dot x=f(x)$. In the gradient case $f=-\nabla V$,
this means that $x^\star$ is a local minimum of $V$. Consider a domain
$\cD\subset\R^d$ whose closure is included in the domain of attraction of
$x^\star$ (all orbits of $\dot x=f(x)$ starting in $\overline\cD$ converge to
$x^\star$, see \figref{fig_stable}).
The \defwd{quasipotential}\/ is the function defined for $z\in\overline\cD$ by 
\begin{equation}
 \label{ld07}
\Vbar(z) = \inf_{T>0} 
\;\inf_{\varphi:\varphi(0)=x^\star,\varphi(T)=z}
I(\varphi)\;. 
\end{equation} 
It measures the cost of reaching $z$ in arbitrary time. 

\begin{theorem}[\cite{VF69,VF70}]
\label{thm_FW}
Let $\tau = \inf\setsuch{t>0}{x_t\not\in\cD}$ denote the first-exit time of
$x_t$ from $\cD$. Then for any initial condition $x_0\in\cD$, we have 
\begin{equation}
 \label{ld08}
\lim_{\eps\to0} 2\eps\log\expecin{x_0}{\tau} = \inf_{z\in\partial\cD}
\Vbar(z)\bydef\Vbar\;. 
\end{equation}  
\end{theorem}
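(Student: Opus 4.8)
The plan is to establish the two logarithmic inequalities separately: the upper bound $\limsup_{\eps\to0}2\eps\log\expecin{x_0}{\tau}\leqs\Vbar$ and the matching lower bound. Both rest on the large-deviation principle~\eqref{ld04}, used in its lower-bound form for the upper bound on $\expecin{x_0}{\tau}$, and in its upper-bound form for the lower bound.

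For the upper bound, I would first show that from \emph{any} starting point $x_0\in\cD$ the diffusion leaves $\cD$ within a fixed time $T$ with probability at least $\e^{-(\Vbar+\eta)/2\eps}$, for every $\eta>0$ and all $\eps$ small enough. One constructs a suitable deterministic reference path: since $\overline\cD$ lies in the basin of attraction of $x^\star$, the orbit of $\dot x=f(x)$ from $x_0$ reaches a small ball $\cB_\rho(x^\star)$ in bounded time, so by the lower bound in~\eqref{ld04} applied to a tube around that orbit (on which $I$ is close to $0$) the diffusion does the same with probability close to $1$; concatenating with a near-minimiser of~\eqref{ld07} joining $x^\star$ to a point just outside $\cD$, the lower bound in~\eqref{ld04} yields the claimed probability. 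A standard renewal argument then finishes: splitting $[0,\infty)$ into blocks of length $T$, the probability of not having exited after $n$ blocks is at most $(1-\e^{-(\Vbar+\eta)/2\eps})^n$, whence $\expecin{x_0}{\tau}\leqs T\e^{(\Vbar+\eta)/2\eps}$, and letting $\eta\to0$ gives the upper bound.

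For the lower bound I would use an excursion (``cycling'') argument. Fix $0<\rho_1<\rho_2$ with $\overline{\cB_{\rho_2}(x^\star)}\subset\cD$ and introduce stopping times following successive excursions out of $\cB_{\rho_1}(x^\star)$: to exit $\cD$, such an excursion must reach $\partial\cD$ before returning to $\cB_{\rho_1}(x^\star)$. By continuity of the quasipotential at $x^\star$, where it vanishes, any path from $\overline{\cB_{\rho_1}(x^\star)}$ to $\partial\cD$ has action at least $\Vbar-\eta/2$ once $\rho_1$ is small; hence the (uniform) upper bound in~\eqref{ld04} shows a single excursion succeeds with probability at most $\e^{-(\Vbar-\eta)/2\eps}$. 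The number of excursions before exit is therefore stochastically bounded below by a geometric variable of mean $\geqs\e^{(\Vbar-\eta)/2\eps}$, and since each unsuccessful excursion consumes a time bounded below by a positive constant with probability close to $1$ (for small $\eps$, Brownian motion cannot cross the annulus $\cB_{\rho_2}\setminus\cB_{\rho_1}$ almost instantaneously), one gets $\expecin{x_0}{\tau}\geqs c\,\e^{(\Vbar-\eta)/2\eps}$, and $\eta\to0$ concludes.

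The main obstacle is the two points where the finite-time large-deviation principle~\eqref{ld04} must be upgraded to statements about \emph{arbitrary} time horizons — the quasipotential~\eqref{ld07} optimises over all $T$ — and made \emph{uniform} in the initial condition. The resolution is to truncate excursion lengths: one shows that, with overwhelming probability, an excursion either returns to $\cB_{\rho_1}(x^\star)$ or reaches $\partial\cD$ within some large but fixed time $T_0$ (the strong restoring drift near $x^\star$ forbids long sojourns), so that only finitely many applications of the finite-time, uniform LDP are needed. Carrying out this truncation and the attendant uniformity, together with the continuity of $\Vbar$ near $x^\star$, is the technical heart of the argument.
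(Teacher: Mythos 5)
Your proposal is correct and follows essentially the same route as the paper's (very brief) sketch: reduce to a neighbourhood of $x^\star$, use the finite-time LDP to estimate the probability of escape per attempt, and then a Markov-restart/geometric-distribution argument to pass to the expected exit time, together with the uniformity and time-truncation issues you correctly flag as the technical heart. Your use of excursions between annuli for the lower bound is the standard way to make the paper's ``restart at multiples of $T$'' rigorous and uniform in the starting point, not a different approach.
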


\begin{proof}
First one shows that for any
$x_0\in\cD$, it is likely to hit a small neighbourhood of $x^\star$ in finite
time. The large-deviation principle shows the existence of a time $T>0$,
independent of $\eps$, such that the probability of leaving $\cD$
in time $T$ is close to $p=\e^{-\Vbar/2\eps}$. Using the Markov
property to restart the process at multiples of $T$, one shows that the
number of time intervals of length $T$ needed to leave $\cD$ follows an
approximately geometric distribution, with expectation $1/p= \e^{\Vbar/2\eps}$
(these time intervals can be viewed as repeated \lq\lq attempts\rq\rq\ of the
process to leave $\cD$). The errors made in the different approximations vanish
when taking the limit~\eqref{ld08}.
\end{proof}

Wentzell and Freidlin also show that if the quasipotential reaches its minimum
on $\partial\cD$ at a unique, isolated point, then the first-exit location
$x_{\tau}$ concentrates in that point as $\eps\to0$. As for the
distribution of $\tau$, Day has shown that it is asymptotically exponential:

\begin{theorem}[\cite{Day1}]
\label{thm_Day} 
In the situation described above, 
\begin{equation}
 \label{ld09}
\lim_{\eps\to0} \bigprob{\tau > s \, \expec{\tau}} = \e^{-s} 
\end{equation} 
for all $s>0$. 
\end{theorem}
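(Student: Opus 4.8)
The plan is to establish an approximate loss-of-memory property: once the process has relaxed into a small neighbourhood of $x^\star$, the probability of leaving $\cD$ in the near future no longer depends, to leading order, on where the process came from. The exit from $\cD$ then occurs after a geometrically distributed number of nearly independent ``attempts'', and a geometric law divided by its mean converges to an exponential law. Fix $x_0\in\cD$ and write $\prob{\cdot}=\probin{x_0}{\cdot}$, $\expec{\cdot}=\expecin{x_0}{\cdot}$. First I would fix a small ball $\cN=\cB_\rho(x^\star)$ and a slightly larger one $\cN'$ with $\overline{\cN}\subset\cN'$ and $\overline{\cN'}\subset\cD$. Using the two ingredients already behind Theorem~\ref{thm_FW} --- all deterministic orbits in $\overline\cD$ converge to $x^\star$, and on any fixed time interval sample paths follow the deterministic flow with probability tending to $1$ --- one shows that from any starting point in $\overline\cD$ the process reaches $\cN$ before some $\eps$-independent time $T_0$ and before leaving $\cD$, with probability $\to1$. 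This reduces the problem to initial conditions on $\partial\cN$.

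Next I would introduce the successive returns of the process to $\cN$: stopping times $0\leqs\theta_1\leqs\theta_2\leqs\cdots$ at which the process alternately reaches $\partial\cN'$, after being in $\cN$, and then either comes back to $\cN$ or exits $\cD$; each such cycle counts as one attempt. The crucial estimate is that the success probability of an attempt,
\begin{equation*}
 p_\eps(x)\defby\bigprobin{x}{x_t\text{ reaches }\partial\cD\text{ before returning to }\cN}\,,
 \qquad x\in\partial\cN\,,
\end{equation*}
is asymptotically independent of the entrance point $x$, that is, $\sup_{x,y\in\partial\cN}\abs{p_\eps(x)/p_\eps(y)-1}\to0$ as $\eps\to0$, so that one may write $p_\eps(x)=p_\eps\bigpar{1+o(1)}$ for a reference value $p_\eps$. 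Combined with Theorem~\ref{thm_FW} applied to the exit from $\cD$, and a matching lower bound, this yields $p_\eps=\e^{-(\Vbar+o(1))/2\eps}\to0$, and that $\expec{\tau}$ is of the same exponential order as $1/p_\eps$.

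Granting the uniform estimate, the strong Markov property at the times $\theta_k$ shows that the number $N$ of attempts needed to succeed is, up to $o(1)$ corrections, geometric with parameter $p_\eps$; hence $p_\eps N$ converges in distribution to an exponential random variable of mean $1$. It then remains to check that the durations of the individual attempts are negligible compared with the whole exit time --- a law-of-large-numbers effect, since $N$ diverges --- so that $\tau/\expec{\tau}$ and $p_\eps N$ share the same limiting law. This gives $\bigprob{\tau>s\,\expec{\tau}}\to\e^{-s}$ for every $s>0$, and the argument is uniform in $x_0\in\cD$.

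The main obstacle is the uniform loss-of-memory estimate on $p_\eps(x)$: a priori these escape probabilities could differ by a fixed multiplicative constant, which would already spoil the geometric approximation. Proving it amounts to showing that an excursion from $\partial\cN$ back to $\cN$ essentially forgets its starting point, which one establishes by combining large-deviation control of the cost of such excursions with interior regularity (Harnack- or strong-Feller-type) estimates for the transition kernel inside $\cN$. This is precisely the step that the later, potential-theoretic treatments discussed in Section~\ref{sec_analytic} replace by sharp two-sided capacity estimates together with a spectral-gap argument, which identifies $1/\expec{\tau}$ with a simple, well-isolated principal eigenvalue of the Dirichlet generator on $\cD$.
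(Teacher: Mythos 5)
The paper states Theorem~\ref{thm_Day} with a citation to Day's work but gives no proof or sketch, so there is no paper-internal argument to compare against. Your overall plan --- relax into a neighbourhood $\cN$ of $x^\star$, count excursions as \lq\lq attempts\rq\rq, argue loss of memory so that the number $N$ of attempts is approximately geometric, pass from geometric to exponential after dividing by the mean, and check that attempt durations are negligible --- is a standard and legitimate route to this result, and it is closely parallel to the paper's own sketch of Theorem~\ref{thm_FW}; it is also broadly the picture behind Day's argument.

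Where the sketch is soft is exactly where you flag it, the uniform loss-of-memory estimate, but the tools you propose there would not deliver the statement you need. An elliptic Harnack inequality for the harmonic function $p_\eps$ (or a strong Feller bound for the kernel) controls $\sup p_\eps/\inf p_\eps$ by a \emph{fixed} geometric constant, not by $1+o(1)$; moreover, for $L=\eps\Delta-\nabla V\cdot\nabla$ on a neighbourhood of \emph{fixed} size the Harnack constant deteriorates as $\eps\to0$, so one is forced to shrink $\cN$ with $\eps$, which then conflicts with the $\eps$-independent large-deviation control you invoke elsewhere, and large deviations themselves give only logarithmic equivalence. The ingredient that actually forces the ratio to $1$, and that your sketch leaves implicit, is a separation of time scales: from any point of $\partial\cN$ the process reaches and then dwells near $x^\star$ for a time that is $\gg1$ yet $\ll\expec{\tau}$ before any excursion of large-deviation size occurs, and during that dwell it couples (equivalently, its law on $\partial\cN$ equilibrates) at an $\eps$-independent rate. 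It is this coupling, not interior regularity of the kernel in $\cN$, that makes successive attempts asymptotically i.i.d.\ and $N$ asymptotically geometric, and it is the real engine of Day's proof. Your closing alternative via the principal Dirichlet eigenvalue and a spectral gap is a valid and cleaner way to organise the same content, and is what later potential-theoretic treatments (e.g.~\cite{BEGK}) do, but it is not the argument in the reference cited by the paper. There is also a minor wording issue: as written, $p_\eps(x)$ for $x\in\partial\cN$ is ill-posed, since \lq\lq returning to $\cN$\rq\rq\ happens instantly; you want the probability, after first reaching $\partial\cN'$, of hitting $\partial\cD$ before $\cN$.
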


In general, the quasipotential $\Vbar$ has to be determined by minimising the
rate function~\eqref{ld03}, using either the Euler--Lagrange equations or the
associated Hamilton equations. In the gradient case $f=-\nabla V$, however, a
remarkable simplification occurs. Indeed, we can write 
\begin{align}
\nonumber
I(\varphi) 
&= \frac12 \int_0^T \norm{\dot\varphi(t) + \nabla V(\varphi(t))}^2 \6t \\
\nonumber
&= \frac12 \int_0^T \norm{\dot\varphi(t) - \nabla V(\varphi(t))}^2 \6t
+ 2 \int_0^T \pscal{\dot\varphi(t)}{\nabla V(\varphi(t))} \6t \\
&= \frac12 \int_0^T \norm{\dot\varphi(t) - \nabla V(\varphi(t))}^2 \6t
+ 2 \bigbrak{V(\varphi(T)) - V(\varphi(0))}\;.
 \label{ld10}
\end{align} 
The first term on the right-hand vanishes if $\varphi(t)$ is a solution of the
time-reversed deterministic system $\dot\varphi = + \nabla V(\varphi)$.
Connecting a local minimum $x^\star$ to a point in the basin of attraction of
$x^\star$ by such a solution is possible, if one allows for arbitrarily long 
time. Thus it follows that the quasipotential is given by 
\begin{equation}
 \label{ld11}
\Vbar = 2 \Bigbrak{\inf_{\partial\cD} V - V(x^\star)}\;.
\end{equation} 

\begin{cor}
\label{cor_wf}
In the double-well situation, 
\begin{equation}
 \label{ld12}
\lim_{\eps\to0} \eps\log \bigexpec{\tau_{\cB_\delta(y^\star)}} = V(z^\star) -
V(x^\star)\;. 
\end{equation}  
\end{cor}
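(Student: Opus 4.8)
The plan is to derive this from Theorem~\ref{thm_FW} and the explicit quasipotential~\eqref{ld11}: on the exponential scale, the time to hit $\cB_\delta(y^\star)$ starting from $x^\star$ should coincide with the time to leave the basin of attraction of $x^\star$. I would prove the two inequalities in~\eqref{ld12} separately.

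\emph{Lower bound.} Fix a small $c>0$ and let $\cD_c$ be the connected component containing $x^\star$ of the sublevel set $\setsuch{x}{V(x)<V(z^\star)-c}$. Since $V$ grows at infinity, $\cD_c$ is a bounded domain; since $c$ is small, $V(z^\star)-c$ is a regular value, so $\partial\cD_c$ is a smooth hypersurface on which $V\equiv V(z^\star)-c$. In the double-well situation the only critical point of $V$ inside $\cD_c$ is $x^\star$ (the saddle lies at level $V(z^\star)$, and the well of $y^\star$ is a different connected component of the sublevel set), so the gradient flow carries every point of $\overline{\cD_c}$ to $x^\star$; hence $\overline{\cD_c}$ lies in the basin of attraction of $x^\star$, and for $\delta$ small $\cB_\delta(y^\star)$ is disjoint from $\overline{\cD_c}$. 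Consequently, writing $\tau_{\cD_c}$ for the first-exit time from $\cD_c$, one has $\tau_{\cB_\delta(y^\star)}\geqs\tau_{\cD_c}$ almost surely, so Theorem~\ref{thm_FW} combined with~\eqref{ld11} (where $\inf_{\partial\cD_c}V=V(z^\star)-c$) gives
\begin{equation*}
\liminf_{\eps\to0}\eps\log\bigexpecin{x^\star}{\tau_{\cB_\delta(y^\star)}}
\;\geqs\;\lim_{\eps\to0}\eps\log\bigexpecin{x^\star}{\tau_{\cD_c}}
\;=\;V(z^\star)-c-V(x^\star)\;.
\end{equation*}
Letting $c\to0$ yields the lower bound in~\eqref{ld12}.

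\emph{Upper bound.} Fix $\eta>0$. I would first construct a path $\varphi\colon[0,T]\to\R^d$ from $x^\star$ into $\cB_\delta(y^\star)$ with action $I(\varphi)\leqs 2[V(z^\star)-V(x^\star)]+\eta$: by the decomposition~\eqref{ld10}, climbing from $x^\star$ to a small neighbourhood of $z^\star$ along a time-reversed gradient trajectory costs $2[V(z^\star)-V(x^\star)]$ up to an error that can be made arbitrarily small (by shrinking the neighbourhood and taking $T$ large, plus a negligible cost to leave the minimum), pushing across the saddle costs arbitrarily little, and descending to $y^\star$ along the deterministic flow costs nothing. The large-deviation lower bound in~\eqref{ld04} then provides $T_1<\infty$ and $\eps_0>0$ with $\bigprobin{x_0}{\tau_{\cB_\delta(y^\star)}\leqs T_1}\geqs p_\eps$ for all $\eps<\eps_0$ and all $x_0$ in a fixed small ball around $x^\star$, where $p_\eps\defby\e^{-(V(z^\star)-V(x^\star)+\eta)/\eps}$. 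Moreover, from any point of a large sublevel set $\setsuch{x}{V(x)\leqs K}$ the diffusion reaches that small ball or $\cB_\delta(y^\star)$ in expected time of order $1$ (the deterministic flow being attracted to $x^\star$ or to $y^\star$), and for $K$ large it does not leave $\setsuch{x}{V(x)\leqs K}$ before hitting $\cB_\delta(y^\star)$ except with negligible probability. Restarting the diffusion at these return times and at multiples of $T_1$ and using the Markov property, the number of attempts needed to enter $\cB_\delta(y^\star)$ is stochastically dominated by a geometric random variable of parameter $p_\eps$, whence $\bigexpecin{x^\star}{\tau_{\cB_\delta(y^\star)}}\leqs\const p_\eps^{-1}\leqs\e^{(V(z^\star)-V(x^\star)+2\eta)/\eps}$ for $\eps$ small. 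Thus $\limsup_{\eps\to0}\eps\log\bigexpecin{x^\star}{\tau_{\cB_\delta(y^\star)}}\leqs V(z^\star)-V(x^\star)+2\eta$, and $\eta\to0$ gives the upper bound.

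The step I expect to be the main obstacle is the upper bound. Theorem~\ref{thm_FW} applied to an exhaustion of the basin of attraction of $x^\star$ yields only the lower bound, since every admissible domain $\cD$ (one whose closure lies in the open basin) has $\inf_{\partial\cD}V<V(z^\star)$, and this quantity can only be pushed up to $V(z^\star)$ from below. Obtaining the matching upper bound genuinely requires exhibiting the near-optimal crossing path over the saddle together with the renewal/Markov-property argument above, including the routine but necessary verification that the diffusion neither escapes to infinity on the relevant exponential time scale nor takes more than order-$1$ expected time to return to a neighbourhood of $x^\star$ between failed attempts.
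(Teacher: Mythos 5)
Your lower bound coincides with the paper's: both take a domain $\cD$ contained in the basin of attraction of $x^\star$ with $\inf_{\partial\cD}V$ pushed up toward $V(z^\star)$, apply Theorem~\ref{thm_FW} together with~\eqref{ld11}, and use $\tau_{\cB_\delta(y^\star)}\geqs\tau_{\cD}$. For the upper bound the two arguments genuinely diverge. The paper's sketch continues with the same $\cD$: it asserts that once the process leaves $\cD$, the additional time to hit $\cB_\delta(y^\star)$ is negligible compared to $\expecin{x^\star}{\tau_\cD}$. That sentence is shorthand for an argument which, done carefully, has exactly the subtlety you point to: since $\overline\cD$ lies strictly inside the basin, the exit point sits on the $x^\star$ side of $z^\star$, so with probability tending to one the path falls back in, and $\expecin{z}{\tau_{\cB_\delta(y^\star)}}$ for an exit point $z$ is \emph{not} small compared to $\expecin{x^\star}{\tau_\cD}$; what makes the paper's reduction work is that the crossing probability from the exit point decays at exactly the rate that compensates the gap between $\inf_{\partial\cD}V$ and $V(z^\star)$, so the exponential rates still match after a renewal argument. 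You instead bypass the reduction to $\tau_\cD$ entirely and rerun the Wentzell--Freidlin machinery (near-optimal path via the decomposition~\eqref{ld10}, LDP lower bound on the probability of success in a fixed time window, order-one return times, geometric domination) directly on $\tau_{\cB_\delta(y^\star)}$. This is correct, and it is in effect the proof of Theorem~\ref{thm_FW} itself adapted to the hitting time rather than a corollary of it; the price is re-deriving the renewal estimate, the gain is that the "negligible remainder" claim never has to be made precise. Your closing remark that Theorem~\ref{thm_FW} applied to any single admissible $\cD$ can only supply the lower bound, because $\inf_{\partial\cD}V<V(z^\star)$ whenever $\overline\cD$ sits inside the open basin (the minimising path from $x^\star$ to $y^\star$ must cross $\partial\cD$ at a point strictly below $z^\star$), is accurate and explains why the extra step is unavoidable in either formulation.
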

\begin{proof}
Let $\cD$ be a set containing $x^\star$, and contained in the basin of
attraction of $x^\star$. One can choose $\cD$ in such a way that its boundary is
close to $z^\star$, and that the minimum of $V$ on $\partial\cD$ is attained
close to $z^\star$. Theorem~\ref{thm_FW} and~\eqref{ld11} show that a relation
similar to~\eqref{ld12} holds for the first-exit time from $\cD$. Then one shows
that once $x_t$ has left $\cD$, the average time needed to hit a small
neighbourhood of $y^\star$ is negligible compared to the expected first-exit
time from $\cD$. 
\end{proof}

\begin{remark}\hfill
\begin{enum}
\item	The case of more than two stable equilibrium points (or more general
attractors) can be treated by organising these points in a hierarchy of \lq\lq
cycles\rq\rq, which determines the exponent in Arrhenius' law and other
quantities of interest. See~\cite{FW,Freidlin1}. 

\item	As we have seen, the large-deviations approach is not limited to the
gradient case, but also allows to compute the exponent for irreversible
systems, by solving a variational problem. However, to our knowledge a rigorous
computation of the prefactor by this approach has not been achieved, as it
would require proving that the large-deviation functional $I$ also yields the
correct subexponential asymptotics. 

\item 	Sugiura~\cite{Sugiura95,Sugiura96a,Sugiura01} has built on these 
large-deviation results to derive approximations for the small eigenvalues of
the diffusion's generator (defined in the next section).
\end{enum}
\end{remark}


\newpage

\section{Analytic approaches and Kramers' law}
\label{sec_analytic}

The different analytic approaches to a proof of Kramers' law are based on
the fact that expected first-hitting times, when considered as a function of
the starting point, satisfy certain partial differential equations related to
Feynman--Kac formulas. 

To illustrate this fact, we consider the case of the symmetric simple random
walk on $\Z$. Fix two disjoint sets $A, B\subset \Z$, for instance of the form
$A=(-\infty,a]$ and $B=[b,\infty)$ with $a<b$ (\figref{fig_srw}). A first
quantity of interest is the probability of hitting $A$ before $B$, when starting
in a point $x$ between $A$ and $B$:
\begin{equation}
 \label{an01}
h_{A,B}(x) = \probin{x}{\tau_A < \tau_B}\;. 
\end{equation} 
For reasons that will become clear in Section~\ref{ssec_potential}, $h_{A,B}$
is called the \defwd{equilibrium potential}\/ between $A$ and $B$
(some authors call $h_{A,B}$ the \defwd{committor function}). 
Using the Markov property to restart the process after the first step, we can
write 
\begin{align}
\nonumber
h_{A,B}(x) ={}& \probin{x}{\tau_A < \tau_B, X_1=x+1} + 
\probin{x}{\tau_A < \tau_B, X_1=x-1} \\
\nonumber
={}& \pcondin{x}{\tau_A < \tau_B}{X_1=x+1}\probin{x}{X_1=x+1} \\
\nonumber
&{}+ \pcondin{x}{\tau_A < \tau_B}{X_1=x-1}\probin{x}{X_1=x-1} \\
={}& h_{A,B}(x+1) \cdot\tfrac12 + h_{A,B}(x-1) \cdot\tfrac12\;.
\label{an02} 
\end{align}
Taking into account the boundary conditions, we see that $h_{A,B}(x)$ satisfies
the linear Dirichlet boundary value problem  
\begin{align}
\nonumber
\Delta h_{A,B}(x) &= 0\;, & &x\in(A\cup B)^c\;, \\
\nonumber
h_{A,B}(x) &= 1\;, & &x\in A\;, \\
h_{A,B}(x) &= 0\;, & &x\in B\;,
\label{an03} 
\end{align}
where $\Delta$ denotes the discrete Laplacian 
\begin{equation}
 \label{an04}
(\Delta h)(x) = h(x-1) - 2h(x) + h(x+1)\;. 
\end{equation} 
A function $h$ satisfying $\Delta h=0$ is called a (discrete)
\defwd{harmonic}\/ function. In this one-dimensional situation, it is easy to
solve~\eqref{an03}: $h_{A,B}$ is simply a linear function of $x$ between $A$
and $B$. 

\begin{figure}
\centerline{\includegraphics*[clip=true,width=100mm]{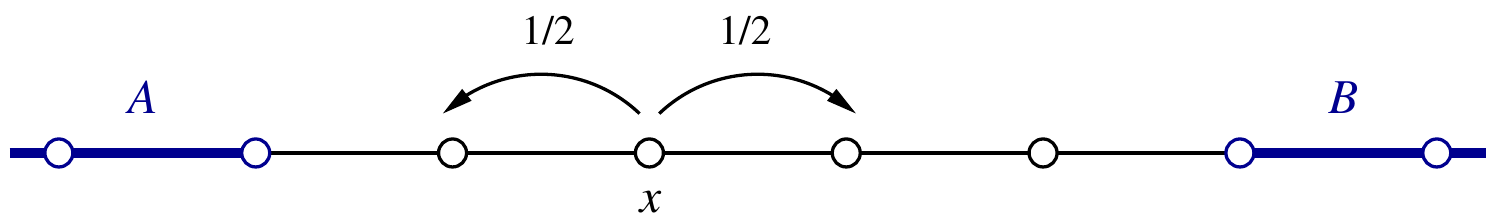}}
 \vspace{2mm}
\caption[]{Symmetric random walk on $\Z$ with two absorbing sets $A$, $B$. 
}
\label{fig_srw}
\end{figure}

A similar boundary value problem is satisfied by the mean first-hitting time of
$A$, $w_A(x)=\expecin{x}{\tau_A}$, assuming that $A$ is such
that the expectation exist (that is, the random walk on $A^c$ must be positive
recurrent). Here is an elementary computation (a shorter
derivation can be given using conditional expectations):
\begin{align}
\nonumber
w_A(x) &= \sum_k k\probin{x}{\tau_A=k} \\
\nonumber
&= \sum_k k \Bigbrak{\tfrac12 \probin{x-1}{\tau_A=k-1} + \tfrac12
\probin{x+1}{\tau_A=k-1}} \\
\nonumber
&= \sum_\ell (\ell+1)\Bigbrak{\tfrac12 \probin{x-1}{\tau_A=\ell} + \tfrac12
\probin{x+1}{\tau_A=\ell}} \\
&= \tfrac12 w_A(x-1) + \tfrac12 w_A(x+1) + 1\;.
\label{an05} 
\end{align}
In the last line we have used the fact that $\tau_A$ is almost surely finite, as
a consequence of positive recurrence. 
It follows that $w_A(x)$ satisfies the Poisson problem 
\begin{align}
\nonumber
\tfrac12\Delta w_A(x) &= -1\;, & &x\in A^c\;, \\
w_A(x) &= 0\;, & &x\in A\;. 
\label{an06} 
\end{align}
Similar relations can be written for more general quantities of the form 
$\expecin{x}{\e^{\lambda\tau_A}\indexfct{\tau_A<\tau_B}}$. 

In the case of Brownian motion on $\R^d$, the probability
$h_{A,B}(x)$ of hitting a set $A$ before another set $B$ satisfies the
Dirichlet problem 
\begin{align}
\nonumber
\tfrac12\Delta h_{A,B}(x) &= 0\;, & &x\in(A\cup B)^c\;, \\
\nonumber
h_{A,B}(x) &= 1\;, & &x\in A\;, \\
h_{A,B}(x) &= 0\;, & &x\in B\;,
\label{an07} 
\end{align}
where $\Delta$ now denotes the usual Laplacian in $\R^d$, and the expected
first-hitting time of $A$ satisfies the Poisson problem 
\begin{align}
\nonumber
\tfrac12\Delta w_A(x) &= -1\;, & &x\in A^c\;, \\
w_A(x) &= 0\;, & &x\in A\;.
\label{an08} 
\end{align}
For more general diffusions of the form 
\begin{equation}
 \label{an09}
\6x_t = -\nabla V(x_t)\6t + \sqrt{2\eps} \6W_t\;, 
\end{equation} 
Dynkin's formula~\cite{Dynkin65,Oeksendal} shows that 
similar relations as~\eqref{an07}, \eqref{an08} hold, with $\frac12\Delta$
replaced by the infinitesimal
generator of the diffusion, 
\begin{equation}
 \label{an10}
L = \eps\Delta - \nabla V(x)\cdot\nabla \;. 
\end{equation} 
Note that $L$ is the adjoint of the operator appearing in the Fokker--Planck
equation, which is more familiar to physicists. 
Thus by solving a boundary value problem involving a second-order differential
operator, one can in principle compute the expected first-hitting time, and
thus validate Kramers' law. This turns out to be possible in the
one-dimensional case, but no general solution exists in higher dimension, where 
one has to resort to perturbative techniques instead. 

\begin{remark}
\label{rem_recurrent}
Depending on the set $A$, Systems~\eqref{an06} and~\eqref{an07} need not
admit a bounded solution, owing to the fact that the symmetric random
walk and Brownian motion are null recurrent in dimensions $d=1,2$ and transient
in dimensions $d\geqs3$. A solution exists, however, for sets $A$ with bounded
complement. The situation is less restrictive for diffusions in a confining
potential $V$, which are usually positive recurrent. 
\end{remark}


\subsection{The one-dimensional case}
\label{ssec_1D}

In the case $d=1$, the generator of the diffusion has the form 
\begin{equation}
 \label{1D01}
(Lu)(x) = \eps u''(x) - V'(x)u'(x)\;, 
\end{equation} 
and the equations for $h_{A,B}(x)=\probin{x}{\tau_A<\tau_B}$ and 
$w_A(x) = \expecin{x}{\tau_A}$ can be solved explicitly. 

Consider the case where $A=(-\infty,a)$ and $B=(b,\infty)$ for some $a<b$, and
$x\in(a,b)$. Then it is easy to see that the equilibrium potential is given by 
\begin{equation}
 \label{1D02}
 h_{A,B}(x) = \frac{\displaystyle 
\int_x^b \e^{V(y)/\eps}\6y}{\displaystyle \int_a^b \e^{V(y)/\eps}\6y}\;.
\end{equation} 
Laplace's method to lowest order shows that for small $\eps$, 
\begin{equation}
 \label{1D03}
h_{A,B}(x) \simeq \exp \biggset{-\frac{1}{\eps} \biggbrak{\sup_{[a,b]}V -
\sup_{[x,b]}V}}\;. 
\end{equation} 
As one expects, the probability of hitting $A$ before $B$ is close to $1$ when
the starting point $x$ lies in the basin of attraction of $a$, and
exponentially small otherwise. 

\begin{figure}
\centerline{\includegraphics*[clip=true,height=50mm]{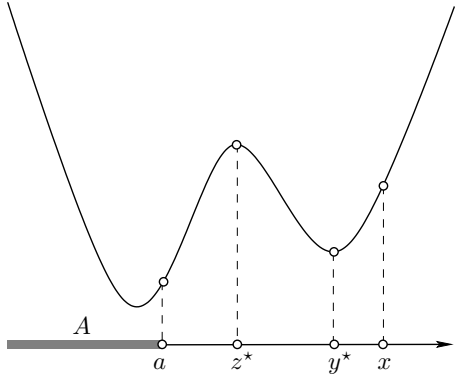}}
 \vspace{2mm}
\caption[]{Example of a one-dimensional potential for which Kramers'
law~\eqref{1D05} holds. 
}
\label{fig_pot1D}
\end{figure}

The expected first-hitting time of $A$ is given by the double integral 
\begin{equation}
 \label{1D04}
w_A(x) = \frac1\eps \int_a^x \int_z^\infty 
\e^{[V(z)-V(y)]/\eps}\6y\6z\;. 
\end{equation} 
If we assume that $x>y^\star>z^\star>a$, where $V$ has a local maximum in
$z^\star$ and a local minimum in $y^\star$ (\figref{fig_pot1D}), then the
integrand is maximal for $(y,z)=(y^\star,z^\star)$ and Laplace's method yields
exactly Kramers' law in the form 
\begin{equation}
 \label{1D05}
\expecin{x}{\tau_A} = w_A(x) = 
\frac{2\pi}{\sqrt{\abs{V''(z^\star)}V''(y^\star)}}
\e^{[V(z^\star)-V(y^\star)]/\eps}
\bigbrak{1+\Order{\sqrt{\eps}}}\;. 
\end{equation} 


\subsection{WKB theory}
\label{ssec_WKB}

The perturbative analysis of the infinitesimal generator~\eqref{an10} of the
diffusion in the limit $\eps\to0$ is strongly connected to semiclassical
analysis. Note that $L$ is not self-adjoint for the canonical scalar product,
but as a consequence of reversibility, it is in fact self-adjoint in
$L^2(\R^d,\e^{-V/\eps}\6x)$. This becomes immediately apparent when writing $L$
in the equivalent form 
\begin{equation}
 \label{WKB01}
L = \eps \e^{V/\eps} \nabla \cdot \e^{-V/\eps}\nabla 
\end{equation} 
(just write out the weighted scalar product). It follows that the conjugated
operator 
\begin{equation}
 \label{WKB02}
\widetilde L = \e^{-V/2\eps}L \e^{V/2\eps} 
\end{equation} 
is self-adjoint in $L^2(\R^d, \6x)$. In fact, a simple computation shows that
$\widetilde L$ is a Schr\"odinger operator of the form 
\begin{equation}
 \label{WKB03}
\widetilde L = \eps\Delta + \frac{1}{\eps} U(x)\;, 
\end{equation} 
where the potential $U$ is given by 
\begin{equation}
 \label{WKB04}
U(x) = \frac12 \eps \Delta V(x) - \frac14 \norm{\nabla V(x)}^2\;. 
\end{equation} 

\begin{example}
\label{ex_WKB}
For a double-well potential of the form 
\begin{equation}
 \label{WKB05}
V(x) = \frac14 x^4 - \frac12 x^2\;,  
\end{equation}  
the potential $U$ in the Schr\"odinger operator takes the form 
\begin{equation}
 \label{WKB06}
U(x) = -\frac14 x^2 (x^2-1)^2 + \frac12 \eps (x^2-1)^2\;. 
\end{equation} 
Note that this potential has $3$ local minima at almost the same height, namely
two of them at $\pm1$ where $U(\pm1)=0$ and one at $0$ where $U(0)=\eps/2$. 
\end{example}

One may try to solve the Poisson problem $Lw_A=-1$ by WKB-techniques in order
to validate Kramers' formula. A closely related problem is to determine the
spectrum of $L$. Indeed, it is known that if the potential $V$ has $n$ local
minima, then $L$ admits $n$ exponentially small eigenvalues, which are related
to the inverse of expected transition times between certain potential minima.
The associated eigenfunctions are concentrated in potential wells and represent
metastable states.

The WKB-approach has been investigated, e.g., in
\cite{SchussMatkowsky1979,BuslovMakarov1988,KolokoltsovMakarov1996,MS1}.
See~\cite{Kolokoltsov00} for a recent review. A mathematical justification of
this formal procedure is often possible, using hard analytical methods such as
microlocal analysis~\cite{HelfferSjostrand1,HelfferSjostrand2,HelfferSjostrand3,
HelfferSjostrand4}, which have been developed for quantum tunnelling problems.
The difficulty in the case of Kramers' law is that due to the form~\eqref{WKB04}
of the Schr\"odinger potential $U$, a phenomenon called \lq\lq tunnelling
through nonresonant wells\rq\rq\ prevents the existence of a single WKB ansatz,
valid in all $\R^d$. One thus has to use different ansatzes in different regions
of space, whose asymptotic expansions have to be matched at the boundaries, a
procedure that is difficult to justify mathematically. 

Rigorous results on the eigenvalues of $L$ have nevertheless been obtained with
different methods in \cite{HolleyKusuokaStroock1989,Miclo1995,Mathieu1995}, but
without a sufficiently precise control of their subexponential behaviour as
would be required to rigorously prove Kramers' law. 


\subsection{Potential theory}
\label{ssec_potential}

\begin{figure}
\centerline{\includegraphics*[clip=true,height=50mm]{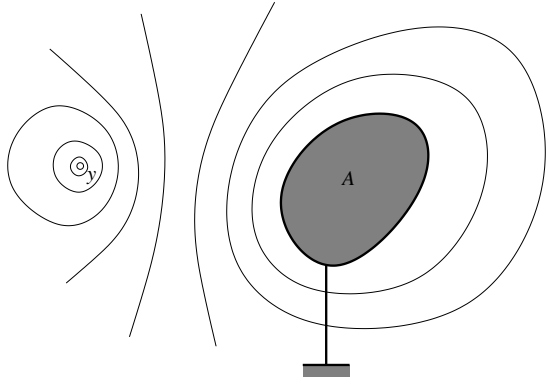}}
 \vspace{2mm}
\caption[]{Green's function $G_{A^c}(x,y)$ for 
Brownian motion is equal to the electrostatic potential in $x$ created by a
unit charge in $y$ and a grounded conductor in $A$. 
}
\label{fig_Green}
\end{figure}

Techniques from potential theory have been widely used in probability
theory~\cite{Kakutani1945,Doob1984,Doyle_Snell,Sznitman_book_1998}.
Although Wentzell may have had in mind its application to Kramers'
law~\cite{Wentzell1973}, this program has been systematically carried out only
quite recently by Bovier, Eckhoff, Gayrard and Klein~\cite{BEGK,BGK}. 

We will explain the basic idea of this approach in the simple setting of
Brownian motion in $\R^d$, which is equivalent to an electrostatics problem.
Recall that the first-hitting time $\tau_A$ of a set $A\subset\R^d$ satisfies
the Poisson problem~\eqref{an06}. It can thus be expressed as 
\begin{equation}
 \label{pot01}
w_A(x) = -\int_{A^c} G_{A^c}(x,y)\6y\;, 
\end{equation} 
where $G_{A^c}(x,y)$ denotes Green's function, which is the formal solution of 
\begin{align}
\nonumber
\tfrac12\Delta u(x) &= \delta(x-y)\;, & &x\in A^c\;, \\
u(x) &= 0\;, & &x\in A\;.
\label{pot02} 
\end{align}
Note that in electrostatics, $G_{A^c}(x,y)$ represents the value at $x$ of the
electric potential created by a unit point charge at $y$, when the set $A$ is
occupied by a grounded conductor (\figref{fig_Green}).  

Similarly, the solution $h_{A,B}(x)=\probin{x}{\tau_A<\tau_B}$ of the Dirichlet
problem~\eqref{an07} represents the electric potential at $x$, created by a
capacitor formed by two conductors at $A$ and $B$, at respective electric
potential $1$ and $0$ (\figref{fig_capacitor}). Hence the name
\defwd{equilibrium potential}. If $\rho_{A,B}$ denotes the surface charge
density on the two conductors, the potential can thus be expressed in the form 
\begin{equation}
 \label{pot03}
h_{A,B}(x) = \int_{\partial A} G_{B^c}(x,y) \rho_{A,B}(\6y)\;.
\end{equation} 
Note finally that the capacitor's capacity is simply equal to the total
charge on either of the two conductors, given by 
\begin{equation}
 \label{pot04}
\capacity_A(B) = \biggabs{\int_{\partial A} \rho_{A,B}(\6y)}\;. 
\end{equation} 

\begin{figure}
\centerline{\includegraphics*[clip=true,height=50mm]{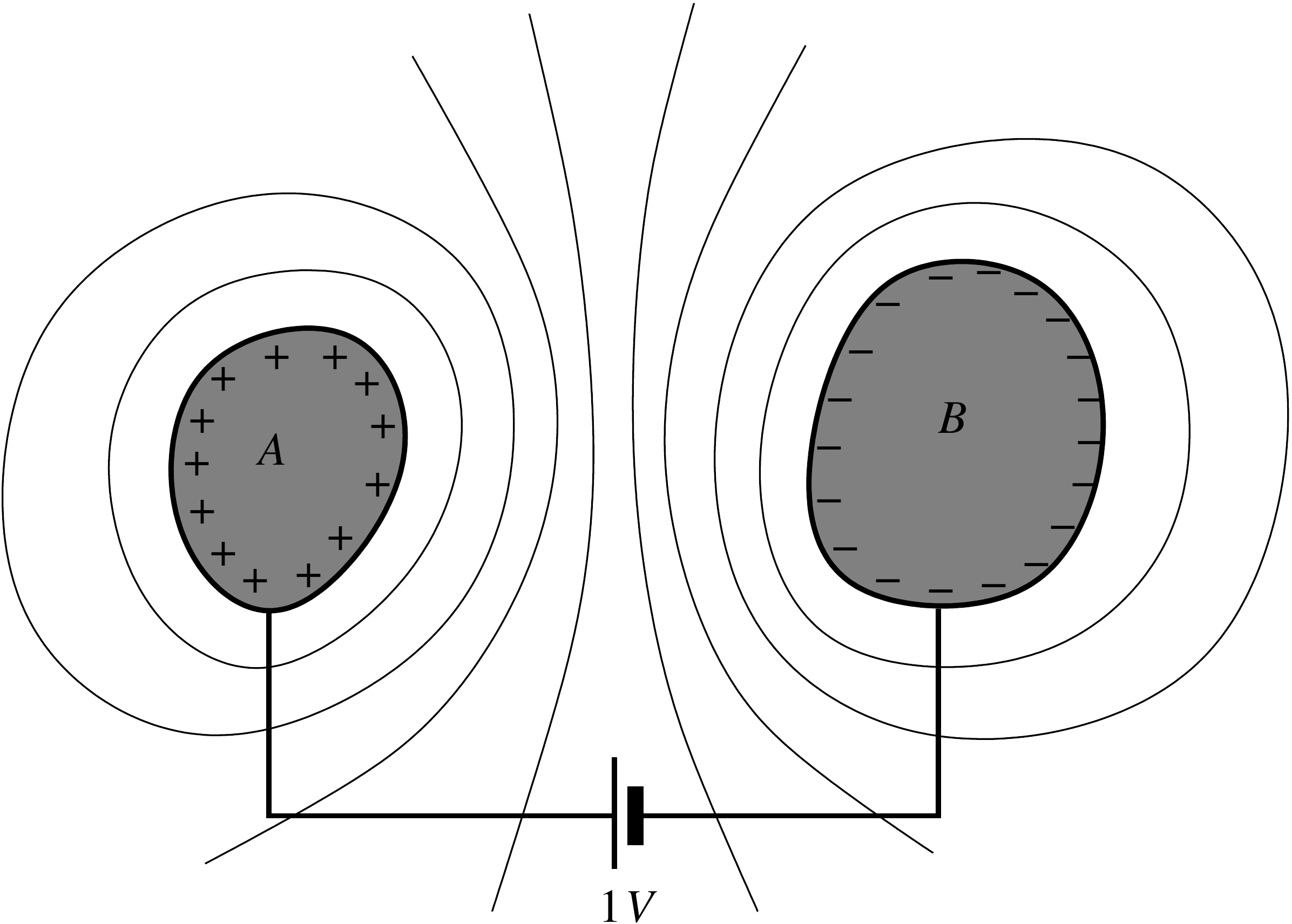}}
 \vspace{2mm}
\caption[]{The function $h_{A,B}(x)=\probin{x}{\tau_A<\tau_B}$ is equal to the
electric potential in $x$ of a capacitor with conductors in $A$ and $B$, at
respective potential $1$ and $0$. 
}
\label{fig_capacitor}
\end{figure}

The key observation is that even though we know neither Green's function, nor
the surface charge density, the expressions~\eqref{pot01},~\eqref{pot03}
and~\eqref{pot04} can be combined to yield a useful relation between expected
first-hitting time and capacity. Indeed, let $C$ be a small ball centred in
$x$. Then we have 
\begin{align}
\nonumber
\int_{A^c} h_{C,A}(y)\6y 
&= \int_{A^c} \int_{\partial C} G_{A^c}(y,z) \rho_{C,A}(\6z) \6y \\
&= -\int_{\partial C} w_A(z) \rho_{C,A}(\6z)\;.
\label{pot05} 
\end{align}
We have used the symmetry $G_{A^c}(y,z)=G_{A^c}(z,y)$, which is a consequence
of reversibility. Now since $C$ is a small ball, if $w_A$ does not vary too
much in $C$, the last term in~\eqref{pot05} will be close to
$w_A(x)\capacity_C(A)$. This can be justified by using a Harnack inequality,
which provides bounds on the oscillatory part of harmonic functions. As a
result, we obtain the estimate 
\begin{equation}
 \label{pot06}
\bigexpecin{x}{\tau_A} = w_A(x) 
\simeq \frac{\displaystyle \int_{A^c} h_{C,A}(y)\6y}{\capacity_C(A)}\;. 
\end{equation} 
This relation is useful because capacities can be estimated by a variational
principle. Indeed, using again the electrostatics analogy, for unit potential
difference, the capacity is equal to the capacitor's electrostatic energy,
which is equal to the total energy of the electric field $\nabla h$:
\begin{equation}
 \label{pot07}
\capacity_A(B) = \int_{(A\cup B)^c} \norm{\nabla h_{A,B}(x)}^2 \6x\;. 
\end{equation} 
In potential theory, this integral is known as a~\defwd{Dirichlet form}. A
remarkable fact is that the capacitor at equilibrium minimises the
electrostatic energy, namely, 
\begin{equation}
 \label{pot08}
\capacity_A(B) = \inf_{h\in\cH_{A,B}}
\int_{(A\cup B)^c} \norm{\nabla h(x)}^2 \6x\;,
\end{equation} 
where $\cH_{A,B}$ denotes the set of all sufficiently regular functions $h$
satisfying the boundary conditions in~\eqref{an07}. 
Similar considerations can be made in the case of general reversible diffusions
of the form 
\begin{equation}
 \label{pot09}
\6x_t = -\nabla V(x_t)\6t + \sqrt{2\eps} \6W_t\;, 
\end{equation} 
a crucial point being that reversibility implies the symmetry 
\begin{equation}
 \label{pot11}
\e^{-V(x)/\eps} G_{A^c}(x,y) =  \e^{-V(y)/\eps} G_{A^c}(y,x)\;.
\end{equation} 
This allows to obtain the estimate 
\begin{equation}
 \label{pot12}
 \bigexpecin{x}{\tau_A} = w_A(x) 
\simeq \frac{\displaystyle \int_{A^c} h_{C,A}(y)\e^{-V(y)/\eps}\6y}
{\capacity_C(A)}\;,
\end{equation} 
where the capacity is now defined as 
\begin{equation}
 \label{pot13}
\capacity_A(B) = \inf_{h\in\cH_{A,B}}
\int_{(A\cup B)^c} \norm{\nabla h(x)}^2 \e^{-V(x)/\eps}\6x\;.
\end{equation} 
The numerator in~\eqref{pot12} can be controlled quite easily. In fact, rather
rough a priori bounds suffice to show that if $x^\star$ is a potential minimum,
then $h_{C,A}$ is exponentially close to $1$ in the basin of attraction of
$x^\star$. Thus by straightforward Laplace asymptotics, we obtain 
\begin{equation}
 \label{pot14}
 \int_{A^c} h_{C,A}(y)\e^{-V(y)/\eps}\6y
= \frac{(2\pi\eps)^{d/2}\e^{-V(x^\star)/\eps}}{\sqrt{\det(\hessian
V(x^\star))}} \bigbrak{1+\Order{\sqrt{\eps}\abs{\log\eps}}}\;.
\end{equation}  
Note that this already provides one \lq\lq half\rq\rq\ of Kramers'
law~\eqref{in09}. The other half thus has to come from the capacity
$\capacity_C(A)$, which can be estimated with the help of the variational
principle~\eqref{pot13}. 

\begin{theorem}[\cite{BEGK}]
\label{thm_BEGK} 
In the double-well situation, Kramers' law holds in the sense that 
\begin{equation}
 \label{pot15}
\bigexpecin{x}{\tau_{\cB_\eps(y^\star)}} = 
\frac{2\pi}{\abs{\lambda_1(z^\star)}} 
\sqrt{\frac{\abs{\det(\hessian V(z^\star))}}{\det(\hessian V(x^\star))}} 
\e^{[V(z^\star)-V(x^\star)]/\eps}
\bigbrak{1+\Order{\eps^{1/2}\abs{\log\eps}^{3/2}}}\;,
\end{equation} 
where $\cB_\eps(y^\star)$ is the ball of radius $\eps$ (the same $\eps$ as in
the diffusion coefficient) centred in~$y^\star$. 
\end{theorem}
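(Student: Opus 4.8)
The plan is to push the potential-theoretic identity~\eqref{pot12} all the way to an explicit constant; in view of the Laplace estimate~\eqref{pot14} this reduces Theorem~\ref{thm_BEGK} to a sharp two-sided bound on the capacity,
\[
\capacity_{\cB_\eps(x^\star)}\bigl(\cB_\eps(y^\star)\bigr)
= \frac{\abs{\lambda_1(z^\star)}}{2\pi}\,
\frac{(2\pi\eps)^{d/2}}{\sqrt{\bigabs{\det(\hessian V(z^\star))}}}\,
\e^{-V(z^\star)/\eps}\bigl(1+\Order{\eps^{1/2}\abs{\log\eps}^{3/2}}\bigr)\;,
\]
(with the $\eps$-weighting of the Dirichlet form dictated by the generator~\eqref{an10}). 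Before that, I would make~\eqref{pot12} itself rigorous: starting from the exact identity~\eqref{pot05} with $C=\cB_\eps(x)$, an elliptic Harnack inequality — applicable precisely because the radius of $C$ is comparable to $\eps$ — lets one pull $w_A$ out of the integral at the cost of a factor $1+\Order{\eps^{1/2}\abs{\log\eps}}$, while crude a priori bounds (the maximum principle together with the large-deviation estimate behind Corollary~\ref{cor_wf}) show that $h_{C,A}$ is exponentially close to $1$ on the basin of $x^\star$ and exponentially small outside it. A single application of Laplace's method then yields~\eqref{pot14}, which already contributes the factor $(2\pi\eps)^{d/2}\e^{-V(x^\star)/\eps}/\sqrt{\det(\hessian V(x^\star))}$.

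For the upper bound on the capacity I would use the Dirichlet variational principle~\eqref{pot08}/\eqref{pot13} with an explicit test function $h^{+}$: set $h^{+}\equiv 1$ on the part of the basin of $x^\star$ outside a box of side $\rho\sim\sqrt{\eps\abs{\log\eps}}$ centred at $z^\star$, $h^{+}\equiv 0$ on the corresponding part of the basin of $y^\star$, and inside the box let $h^{+}$ depend only on the coordinate $\zeta_1$ along the unstable eigendirection of $\hessian V(z^\star)$, equal there to the error-function profile $f(\zeta_1)\propto\int_{\zeta_1}^{\infty}\e^{-\abs{\lambda_1(z^\star)}u^2/2\eps}\,\6u$ solving the one-dimensional Euler--Lagrange equation. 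Replacing $V$ by its quadratic Taylor polynomial at $z^\star$ — legitimate up to a relative error $\Order{\rho^3/\eps}=\Order{\eps^{1/2}\abs{\log\eps}^{3/2}}$ on the box, with an exponentially negligible contribution from outside — the Dirichlet integral factorises into $d-1$ Gaussian integrals in the stable directions, giving $(2\pi\eps)^{(d-1)/2}/\sqrt{\lambda_2(z^\star)\cdots\lambda_d(z^\star)}$, times the value $\sqrt{\abs{\lambda_1(z^\star)}/2\pi\eps}$ of the one-dimensional problem; multiplying by $\eps\,\e^{-V(z^\star)/\eps}$ reproduces the displayed right-hand side.

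The matching lower bound is the crux, and I expect it to be the main obstacle. The idea is to restrict the Dirichlet integral in~\eqref{pot13} to the box of side $\rho$ around $z^\star$ (which only lowers it) and then reduce the restricted functional to the \emph{same} one-dimensional problem. Either of two routes should work: (i)~foliate the box by the hyperplanes $\zeta_1=\mathrm{const}$, drop the stable-direction derivatives, and apply the Cauchy--Schwarz inequality in $\zeta_1$ on each leaf, using the a priori fact that $h_{C,A}$ is within $\Order{\eps^{1/2}\abs{\log\eps}}$ of $1$, resp.\ $0$, on the two faces $\zeta_1=\pm\rho$; or (ii)~invoke the dual (Thomson / minimal-flow) variational characterisation of the capacity and exhibit a divergence-free unit flow supported in a thin tube through $z^\star$ whose energy matches the upper bound. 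In both cases the delicate points are the same: choosing $\rho$ so that the neglected far-field contribution is beyond all polynomial orders, controlling the \emph{sign} and size of the non-quadratic remainder of $V$ in the lower Gaussian estimate, and propagating the a priori control of the equilibrium potential right up to the boundary of the box.

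Once the two bounds are shown to agree up to a factor $1+\Order{\eps^{1/2}\abs{\log\eps}^{3/2}}$, dividing~\eqref{pot14} by the resulting capacity gives~\eqref{pot15}. I note that the double-well hypothesis enters decisively at the capacity step: it guarantees a \emph{single} relevant saddle, so that the capacity is governed by one box; with several minima, or several competing saddles at the same communication height, one must organise the neighbourhoods into a hierarchy, and the bookkeeping — rather than any new analytic idea — becomes the real difficulty.
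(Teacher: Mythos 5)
Your proposal follows essentially the same route as the paper's own sketch: reduce Theorem~\ref{thm_BEGK} via~\eqref{pot12} and~\eqref{pot14} to a sharp two-sided estimate of the capacity, obtain the upper bound from the Dirichlet principle~\eqref{pot13} with a test function depending only on the unstable coordinate (the one-dimensional profile~\eqref{1D02}), and obtain the lower bound by restricting the Dirichlet form to a box of side $\sim\sqrt{\eps\abs{\log\eps}}$ around $z^\star$, discarding derivatives in the stable directions, and invoking a priori control of the equilibrium potential on the faces transverse to the unstable manifold. Your alternative route (ii) via the Thomson/minimal-flow dual is a legitimate variant not used in~\cite{BEGK}, but your primary plan (i) is exactly the argument the paper outlines.
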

\begin{proof}
In view of~\eqref{pot12} and~\eqref{pot14}, it is sufficient to obtain
sharp upper and lower bounds on the capacity, of the form
\begin{equation}
 \label{pot16}
\capacity_C(A) = \frac1{2\pi} \sqrt{\frac{(2\pi\eps)^d\abs{\lambda_1(z)}}
{\lambda_2(z)\dots\lambda_d(z)}} \e^{-V(z)/\eps}
  \bigbrak{1+\Order{\eps^{1/2}\abs{\log\eps}^{3/2}}}\;.
\end{equation} 
The variational principle~\eqref{pot13} shows that the Dirichlet form of any
function $h\in\cH_{A,B}$ provides an upper bound on the capacity. It is thus
sufficient to construct an appropriate $h$. It turns out that taking
$h(x)=h_1(x_1)$, depending only on the projection $x_1$ of $x$ on the unstable
manifold of the saddle, with $h_1$ given by the solution~\eqref{1D02} of the
one-dimensional case, does the job. 

The lower bound is a bit more tricky to obtain. Observe first that restricting
the domain of integration in the Dirichlet form~\eqref{pot13} to a small
rectangular box centred in the saddle decreases the value of the integral.
Furthermore, the integrand $\norm{\nabla h(x)}^2$ is bounded below by the
derivative in the unstable direction squared. For given values of the
equilibrium potential $h_{A,B}$ on the sides of the box intersecting the
unstable manifold of the saddle, the Dirichlet form can thus be bounded below by
solving a one-dimensional variational problem. Then rough a priori bounds on the
boundary values of $h_{A,B}$ yield the result. 
\end{proof}

\begin{remark}
For simplicity, we have only presented the result on the expected transition
time for the double-well situation. Results in~\cite{BEGK,BGK} also include the
following points:
\begin{enum}
\item	The distribution of $\tau_{\cB_\eps(y)}$ is asymptotically exponential,
in the sense of~\eqref{ld09}.
\item	In the case of more than $2$ local minima, Kramers' law holds for
transitions between local minima provided they are appropriately ordered. See
Example~\ref{ex_multiwell} below. 
\item	The small eigenvalues of the generator $L$ can be sharply estimated,
the leading terms being equal to inverses of mean transition times.
\item	The associated eigenfunctions of $L$ are well-approximated by
equilibrium potentials $h_{A,B}$ for certain sets $A, B$. 
\end{enum}
\end{remark}

If the potential $V$ has $n$ local minima, there exists an ordering 
\begin{equation}
 \label{pot17}
x^\star_1 \prec x^\star_2\prec\dots\prec x^\star_n 
\end{equation} 
such that Kramers' law holds for the transition time from each $x^\star_{k+1}$
to the set $\cM_k=\set{x^\star_1,\dots,x^\star_k}$. The ordering is defined in
terms of communication heights by the condition 
\begin{equation}
 \label{pot18}
H(x^\star_k,\cM_{k-1}) \leqs \min_{i<k} H(x^\star_i,\cM_k\setminus x^\star_i) -
\theta 
\end{equation} 
for some $\theta>0$. This means that the minima are ordered from deepest to
shallowest. 

\begin{figure}
\centerline{\includegraphics*[clip=true,height=50mm]{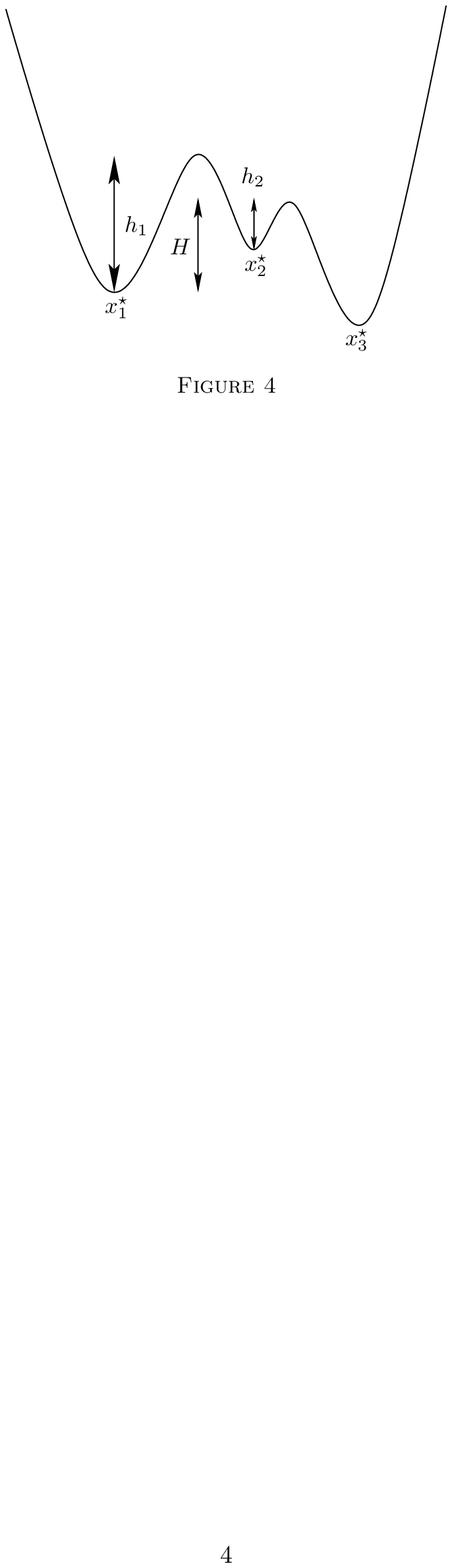}}
 \vspace{2mm}
\caption[]{Example of a three-well potential, with associated metastable
hierarchy. The relevant communication heights are given by
$H(x^\star_2,\set{x^\star_1,x^\star_3})=h_2$ and $H(x^\star_1,x^\star_3)=h_1$.
}
\label{fig_multiwell}
\end{figure}

\begin{example}
\label{ex_multiwell} 
Consider the three-well potential shown in~\figref{fig_multiwell}. The
metastable ordering is given by 
\begin{equation}
 \label{pot20}
x^\star_3 \prec x^\star_1 \prec x^\star_2\;, 
\end{equation} 
and Kramers' law holds in the form 
\begin{equation}
 \label{pot21}
\bigexpecin{x^\star_1}{\tau_3} \simeq C_1 \e^{h_1/\eps}\;, 
\qquad
 \bigexpecin{x^\star_2}{\tau_{\set{1,3}}} \simeq C_2 \e^{h_2/\eps}\;,
\end{equation} 
where the constants $C_1,C_2$ depend on second derivatives of $V$. However, it
is \emph{not}\/ true that $\expecin{x^\star_2}{\tau_3} \simeq C_2
\e^{h_2/\eps}$. In fact, $\expecin{x^\star_2}{\tau_3}$ is rather of the order
$\e^{H/\eps}$. This is due to the fact that even though when starting in
$x^\star_2$, the process is very unlikely to hit $x^\star_1$ before $x^\star_3$
(this happens with a probability of order $\e^{-(h_1-H)/\eps}$), this is
overcompensated by the very long waiting time in the well $x^\star_1$ (of order
$\e^{h_1/\eps}$) in case this happens. 
\end{example}


\subsection{Witten Laplacian}
\label{ssec_Witten}

In this section, we give a brief account of another successful approach to
proving Kramers' law, based on WKB theory for the Witten Laplacian. It provides
a good example of the fact that problems may be made more accessible to
analysis by generalising them. 

Given a compact, $d$-dimensional, orientable manifold $M$, equipped with a
smooth metric $g$, let $\Omega^p(M)$ be the set of differential forms of order
$p$ on $M$. The exterior derivative $\dd$ maps a $p$-form to a $(p+1)$-form. We
write $\dd^{(p)}$ for the restriction of $\dd$ to $\Omega^p(M)$. The sequence 
\begin{equation}
 \label{Witten01}
0 \rightarrow \Omega^0(M) \xrightarrow{\dd^{(0)}} \Omega^1(M) 
\xrightarrow{\dd^{(1)}} \dots 
\xrightarrow{\dd^{(d-1)}} \Omega^d (M) \xrightarrow{\dd^{(d)}} 0
\end{equation} 
is called the \defwd{de Rham complex}\/ associated with $M$. 

Differential forms in the image $\image \dd^{(p-1)}$ are called \defwd{exact},
while differential forms in the kernel $\ker \dd^{(p)}$ are called
\defwd{closed}. Exact forms are closed, that is, $\dd^{(p)}\circ\dd^{(p-1)}=0$
or in short $\dd^2=0$. However, closed forms are not necessarily exact.
Hence the idea of considering equivalence classes of differential forms
differing by an exact form. The vector spaces
\begin{equation}
 \label{Witten02}
H^p(M) = \frac{\ker \dd^{(p)}}{\image \dd^{(p-1)}} 
\end{equation}  
are thus not necessarily trivial, and contain information on the global topology
of $M$. They form the so-called \defwd{de Rham cohomology}. 

The metric $g$ induces a natural scalar product $\pscal{\cdot}{\cdot}_p$ on
$\Omega^p(M)$ (based on the Hodge isomorphism $*$). The \defwd{codifferential}\/
on $M$ is the formal adjoint $\dd^*$ of $\dd$, which maps $(p+1)$-forms to
$p$-forms and satisfies 
\begin{equation}
 \label{Witten03}
\pscal{\dd \omega}{\eta}_{p+1} =  \pscal{\omega}{\dd^*\eta}_p 
\end{equation} 
for all $\omega\in\Omega^p(M)$ and $\eta\in\Omega^{p+1}(M)$. 
The \defwd{Hodge Laplacian}\/ is defined as the symmetric non-negative operator 
\begin{equation}
 \label{Witten04}
\Delta_H = \dd\dd^* + \dd^*\dd  = (\dd+\dd^*)^2\;,
\end{equation} 
and we write $\Delta^{(p)}_H$ for its
restriction to $\Omega^p$. In the Euclidean case $M=\R^d$, using integration by
parts in~\eqref{Witten03} shows that  
\begin{equation}
 \label{Witten05}
 \Delta^{(0)}_H = - \Delta\;,
\end{equation} 
where $\Delta$ is the usual Laplacian. 
Differential forms $\gamma$ in the kernel $\cH^p_\Delta(M) = \ker\Delta^{(p)}_H$
are called \defwd{$p$-harmonic forms}. They are both closed ($\dd\gamma=0$) and
co-closed ($\dd^*\gamma=0$). Hodge has shown (see,
e.g.~\cite{GriffithsHarris1994}) that any differential form
$\omega\in\Omega^p(M)$ admits a unique decomposition 
\begin{equation}
 \label{Witten05b}
\omega = \dd\alpha + \dd^*\beta + \gamma\;, 
\end{equation}  
where $\gamma$ is $p$-harmonic. As a consequence, $\cH^p_\Delta(M)$ is
isomorphic to the $p$th de Rham cohomology group $H^p(M)$.

Given a potential $V:M\to\R$, the \defwd{Witten Laplacian}\/ is defined in a
similar way as the Hodge Laplacian by 
\begin{equation}
\label{Witten06}
\Delta_{V,\eps} = \dd_{V,\eps} \dd^*_{V,\eps} + \dd^*_{V,\eps} \dd_{V,\eps}\;,
\end{equation} 
where $\dd_{V,\eps}$ denotes the deformed exterior derivative 
\begin{equation}
\label{Witten07}
\dd_{V,\eps} = \eps \e^{-V/2\eps} \dd \e^{V/2\eps}\;.
\end{equation} 
As before, we write $\smash{\Delta^{(p)}_{V,\eps}}$ for the restriction of
$\Delta_{V,\eps}$ to $\Omega^p(M)$. A direct computation shows that in the
Euclidean case $M=\R^d$, 
\begin{equation}
 \label{Witten08}
\Delta^{(0)}_{V,\eps} = -\eps^2\Delta + \frac14\norm{\nabla V}^2 -
\frac12\eps\Delta V\;,
\end{equation} 
which is equivalent, up to a scaling, to the Schr\"odinger
operator~\eqref{WKB03}. 

The interest of this approach lies in the fact that while eigenfunctions of
$\smash{\Delta^{(0)}_{V,\eps}}$ are concentrated near local minima of the
potential $V$, those of $\smash{\Delta^{(p)}_{V,\eps}}$ for $p\geqs1$ are
concentrated near saddles of index $p$ of $V$. This makes them easier to
approximate by WKB theory. The intertwining relations 
\begin{equation}
 \label{Witten09}
 \Delta^{(p+1)}_{V,\eps} \dd^{(p)}_{V,\eps}
= \dd^{(p)}_{V,\eps} \Delta^{(p)}_{V,\eps}\;,
\end{equation} 
which follow from $\dd^2=0$, then allow to infer more precise information on
the spectrum of $\smash{\Delta^{(0)}_{V,\eps}}$, and hence of the generator $L$
of the diffusion~\cite{HelfferNier05}. 

This approach has been used by Helffer, Klein and Nier~\cite{HelfferKleinNier04}
to prove Kramers' law~\eqref{in09} with a full asymptotic expansion of the
prefactor $C=C(\eps)$, in~\cite{HelfferNier06} to describe the case of general
manifolds with boundary, and by Le Peutrec~\cite{LePeutrec10} for the case
with Neumann boundary conditions. General expressions for the small eigenvalues
of all $p$-Laplacians have been recently derived
in~\cite{LePeutrec11,LePeutrecNierViterbo12}.


\section{Generalisations and limits}
\label{sec_gen}

In this section, we discuss two generalisations of Kramers' formula, and one
irreversible case, where Arrhenius' law still holds true, but the prefactor is
no longer given by Kramers' law.


\subsection{Non-quadratic saddles}
\label{ssec_nonquad}

Up to now, we have assumed that all critical points are quadratic saddles, that
is, with a nonsingular Hessian. Although this is true generically, as soon as
one considers potentials depending on one or several parameters, degenerate
saddles are bound to occur. See for instance~\cite{BFG06a,BFG06b} for a natural
system displaying many bifurcations involving nonquadratic saddles. Obviously,
Kramers' law~\eqref{in09} cannot be true in the presence of singular Hessians,
since it would predict either a vanishing or an infinite prefactor. In fact, in
such cases the prefactor will depend on higher-order terms of the Taylor
expansion of the potential at the relevant critical points~\cite{Stein04}. The
main problem is thus to determine the prefactor's leading term. 

There are two (non-exclusive) cases to be considered: the starting
potential minimum $x^\star$ or the relevant saddle $z^\star$ is non-quadratic.
The potential-theoretic approach presented in Section~\ref{ssec_potential}
provides a simple way to deal with both cases. In the first case, it is in fact
sufficient to carry out Laplace's method for~\eqref{pot14} when the potential
$V$ has a nonquadratic minimum in $x^\star$, which is straightforward. 

We discuss the more interesting case of the saddle $z^\star$ being
non-quadratic. A general classification of non-quadratic saddles, based on
normal-form theory, is given in~\cite{BG2010}. 

Consider the case where in appropriate coordinates, the potential near the
saddle admits an expansion of the form 
\begin{equation}
 \label{nq01}
V(y) = -u_1(y_1) + u_2(y_2,\dots,y_k) + \frac{1}{2}\sum_{j=k+1}^d \lambda_jy_j^2
+ \Order{\norm{y}^{r+1}}\;, 
\end{equation} 
for some $r\geqs 2$ and $2\leqs k\leqs d$. The functions $u_1$ and $u_2$ may
take negative values in a small neighbourhood of the origin, of the order of
some power of $\eps$, but should become positive and grow outside this
neighbourhood. In that case, we have the following estimate of the capacity:

\begin{theorem}[\cite{BG2010}]
There exists an explicit $\beta>0$, depending on the growth of $u_1$ and
$u_2$, such that in the double-well situation the capacity is given by 
\begin{equation}
 \label{nq02}
\eps \frac{\displaystyle
\int_{\R^{k-1}} \e^{-u_2(y_2,\dots,y_k)/\eps} \6y_2\dots\6y_k}
{\displaystyle\int_{-\infty}^\infty \e^{-u_1(y_1)/\eps}\6y_1}
\prod_{j=k+1}^d \sqrt{\frac{2\pi\eps}{\lambda_j}}
\Bigbrak{1+\Order{\eps^\beta\abs{\log\eps}^{1+\beta}}}\;. 
\end{equation}
\end{theorem}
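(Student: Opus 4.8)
The plan is to run the potential-theoretic machinery of Section~\ref{ssec_potential}: by a variational principle as in~\eqref{pot13} (with the normalisation consistent with~\eqref{pot12}), the capacity equals $\inf_{h\in\cH_{A,B}}\eps\int_{(A\cup B)^c}\norm{\nabla h}^2\e^{-V/\eps}\6x$, the Dirichlet form attached to the generator~\eqref{an10}, so it is enough to exhibit a near-optimal test function (upper bound) and a matching lower bound obtained by localising near the saddle. All estimates take place in a neighbourhood of $z^\star$; since $z^\star$ is the relevant saddle, removing a small neighbourhood of it separates the $x^\star$-basin from the $y^\star$-basin, and the contribution of the rest of configuration space is exponentially negligible. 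The normal form~\eqref{nq01} near $z^\star$ is taken for granted.

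For the upper bound, fix a box $B$ centred at $z^\star$, of half-width $\delta_1$ in the $y_1$-direction and $\delta_2$ in the remaining directions, with $\delta_1,\delta_2$ small powers of $\eps$ to be chosen below; on $B$ take $h(y)=f(y_1)$ with
\begin{equation*}
f(t)=\frac{\displaystyle\int_t^{\delta_1}\e^{-u_1(s)/\eps}\6s}
{\displaystyle\int_{-\delta_1}^{\delta_1}\e^{-u_1(s)/\eps}\6s}\;,
\end{equation*}
i.e.\ the minimiser of $\int_{-\delta_1}^{\delta_1}f'(t)^2\e^{u_1(t)/\eps}\6t$ with $f(-\delta_1)=1$ and $f(\delta_1)=0$, and outside $B$ extend $h$ by $1$ on the $x^\star$-side and $0$ on the $y^\star$-side. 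This $h$ is continuous and lies in $\cH_{A,B}$, and since $\norm{\nabla h}^2=f'(y_1)^2$ on $B$ while $\nabla h\equiv0$ elsewhere, the Dirichlet form factorises into a product of one-dimensional integrals: the $y_1$-integral yields $\bigl(\int_{-\delta_1}^{\delta_1}\e^{-u_1/\eps}\6y_1\bigr)^{-1}$, the $(y_2,\dots,y_k)$-integral yields $\int\e^{-u_2/\eps}\6y_2\dots\6y_k$, and each remaining direction yields a Gaussian integral $\sqrt{2\pi\eps/\lambda_j}\,(1+\order{1})$. Extending the truncated integrals to all of $\R$, resp.\ $\R^{k-1}$, at the price of an exponentially small error, and absorbing the $\Order{\norm{y}^{r+1}}$ remainder of~\eqref{nq01} into a multiplicative factor $1+\order{1}$ on $B$, produces the upper bound~\eqref{nq02}.

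For the lower bound, restrict the domain of integration in the Dirichlet form to the box $B$, which only decreases the non-negative integrand, and bound $\norm{\nabla h_{A,B}}^2\geqs(\partial_{y_1}h_{A,B})^2$. By Fubini, for each fixed transverse coordinate $y'=(y_2,\dots,y_d)$ the inner $y_1$-integral is at least the one-dimensional minimum compatible with the values of $h_{A,B}$ on the two faces $\{y_1=\pm\delta_1\}$. Exactly as in the proof of Theorem~\ref{thm_BEGK}, rough a priori bounds together with a Harnack inequality show that these boundary values are $1+\Order{\eps^\beta}$ on the $x^\star$-face and $\Order{\eps^\beta}$ on the $y^\star$-face, uniformly in $y'$; hence for each $y'$ the slice-wise minimum equals
\begin{equation*}
\bigl(1+\Order{\eps^\beta}\bigr)\,
\frac{\displaystyle\exp\Bigl[-\tfrac1\eps\Bigl(u_2(y_2,\dots,y_k)+\tfrac12\sum_{j>k}\lambda_jy_j^2\Bigr)\Bigr]}
{\displaystyle\int_{-\delta_1}^{\delta_1}\e^{-u_1/\eps}\6y_1}\;.
\end{equation*}
Integrating over the transverse box and comparing with the full transverse integrals gives the lower bound~\eqref{nq02}.

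The hard part is the choice of $\delta_1,\delta_2$ and hence of the exponent $\beta$. On one hand the box must be large enough that the truncated integrals $\int_{-\delta_1}^{\delta_1}\e^{-u_1/\eps}\6y_1$ and $\int\e^{-u_2/\eps}\6y_2\dots\6y_k$ over $B$ already capture their counterparts over $\R$ and $\R^{k-1}$ and that the transverse Gaussian factors are saturated; this forces $\delta_1,\delta_2$ to exceed the scales --- powers of $\eps$ fixed by the growth exponents of $u_1$ and $u_2$ --- on which these functions reach order one. On the other hand the box must be small enough that $\norm{y}^{r+1}=\order{\eps}$ on $B$, so the remainder in~\eqref{nq01} perturbs $\e^{-V/\eps}$ only by a factor $1+\order{1}$, and small enough that the a priori bounds on the boundary values of $h_{A,B}$ can be sharpened from $\order{1}$ to power-of-$\eps$ accuracy. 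Reconciling these competing requirements, and tracking the error terms through the factorisation, is the bulk of the work and yields the explicit $\beta$. As a formal consistency check, taking $u_1$ and $u_2$ quadratic --- so that $k=1$, the $u_2$-integral is trivial and $\int\e^{-u_1/\eps}\6y_1=\sqrt{2\pi\eps/\abs{\lambda_1}}$ --- reduces~\eqref{nq02} to the quadratic formula~\eqref{pot16}.
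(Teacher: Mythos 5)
The proposal is correct and follows essentially the same potential-theoretic strategy that the paper sketches for Theorem~\ref{thm_BEGK}; the paper itself gives no proof here but defers to \cite{BG2010}, and your argument is the natural adaptation of that sketch to the normal form~\eqref{nq01}: upper bound from a test function depending only on the unstable coordinate, lower bound by restricting the Dirichlet form to a box around the saddle, bounding $\norm{\nabla h}^2\geqs(\partial_{y_1}h)^2$, and applying Fubini together with Harnack-type a priori bounds for $h_{A,B}$ on the box faces (and you have also, correctly, restored the factor $\eps$ in front of the Dirichlet form, which is missing from~\eqref{pot13} in the text but is needed for consistency with~\eqref{pot16} and~\eqref{nq02}). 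One small elision, shared with the paper's own sketch of Theorem~\ref{thm_BEGK}: the test function $h(y)=f(y_1)$ on $B$, extended by $1$ and $0$ outside, is \emph{not} continuous across the lateral faces of $B$ (those where $\abs{y_j}=\delta_2$ for some $j\geqs2$ but $\abs{y_1}<\delta_1$); one needs an interpolation layer living in a region where $V\geqs V(z^\star)+c$ for some $c>0$, so that its contribution to the Dirichlet form is exponentially negligible.
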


We discuss one particular example, involving a pitchfork bifurcation.
See~\cite{BG2010} for more examples. 
 
\begin{example}
\label{ex_pitchfork}
Consider the case $k=2$ with 
\begin{align}
\nonumber
u_1(y_1) &= -\frac12\abs{\lambda_1}y_1^2\;, \\
u_2(y_2) &= \frac12\lambda_2 y_2^2 + C_4 y_2^4\;,
 \label{nq03}
\end{align}  
where $\lambda_1<0$ and $C_4>0$ are bounded away from $0$. We assume that the
potential is even in~$y_2$. For $\lambda_2>0$,
the origin is an isolated quadratic saddle. At $\lambda_2=0$, the origin
undergoes a pitchfork bifurcation, and for $\lambda_2<0$, there are two saddles
at $y_2=\pm\sqrt{\abs{\lambda_2}/4C_4}+\Order{\lambda_2}$. Let $\mu_1, \dots,
\mu_d$ denote the eigenvalues of the Hessian of $V$ at these saddles. 

The integrals in~\eqref{nq02} can be computed explicitly, and yield the
following prefactors in Kramers' law: 
\begin{itemiz}
\item	For $\lambda_2\geqs0$, the prefactor is given by 
\begin{equation}
 \label{nq04}
C(\eps) = 2\pi \sqrt{\frac{(\lambda_2+\sqrt{2\eps C_4}\,)
\lambda_3\dots\lambda_d}{\abs{\lambda_1}\det(\hessian V(x^\star))}}
\frac1{\Psi_+(\lambda_2/\sqrt{2\eps C_4})}\;,
\end{equation} 
where the function $\Psi_+$ is bounded above and below by positive constants,
and is given in terms of the modified Bessel function of the second kind
$K_{1/4}$ by 
\begin{equation}
 \label{nq05}
\Psi_+(\alpha) = \sqrt{\frac{\alpha(1+\alpha)}{8\pi}} \e^{\alpha^2/16} 
K_{1/4} \biggpar{\frac{\alpha^2}{16}}\;.
\end{equation} 

\item	For $\lambda_2<0$, the prefactor is given by 
\begin{equation}
 \label{nq06}
C(\eps) = 2\pi \sqrt{\frac{(\mu_2+\sqrt{2\eps C_4}\,)
\mu_3\dots\mu_d}{\abs{\mu_1}\det(\hessian V(x^\star))}}
\frac1{\Psi_-(\mu_2/\sqrt{2\eps C_4})}\;,
\end{equation} 
where the function $\Psi_-$ is again bounded above and below by positive
constants, and given in terms of the modified Bessel functions of the first
kind $I_{\pm 1/4}$ by 
\begin{equation}
 \label{nq07}
\Psi_-(\alpha) = \sqrt{\frac{\pi\alpha(1+\alpha)}{32}} \e^{-\alpha^2/64} 
\biggbrak{I_{-1/4} \biggpar{\frac{\alpha^2}{64}}
+ I_{1/4} \biggpar{\frac{\alpha^2}{64}}}\;.
\end{equation}
\end{itemiz}

\begin{figure}
\centerline{\includegraphics*[clip=true,height=50mm]{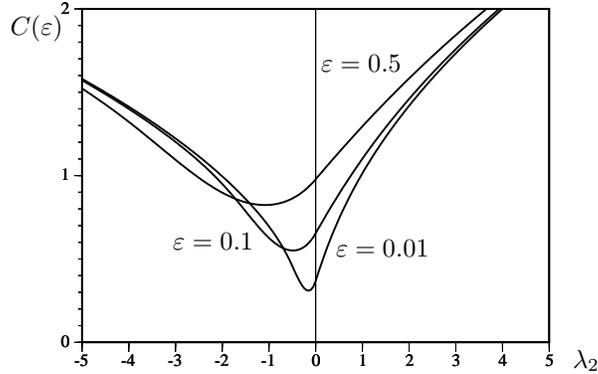}}
 \vspace{2mm}
\caption[]{The prefactor $C(\eps)$ in Kramers' law when the potential undergoes
a pitchfork bifurcation as the parameter $\lambda_2$ changes sign. The minimal
value of $C(\eps)$ has order $\eps^{1/4}$.  
}
\label{fig_Psi}
\end{figure}

As long as $\lambda_2$ is bounded away from $0$, we recover the usual Kramers
prefactor. When $\abs{\lambda_2}$ is smaller than $\sqrt{\eps}$, however, the
term $\sqrt{2\eps C_4}$ dominates, and yields a prefactor of order $\eps^{1/4}$
(see~\figref{fig_Psi}). The exponent $1/4$ is characteristic of this particular
type of bifurcation. 

The functions $\Psi_\pm$ determine a multiplicative constant, which is close to
$1$ when $\lambda_2\gg\sqrt{\eps}$, to $2$ when $\lambda_2\ll - \sqrt{\eps}$,
and to $\Gamma(1/4)/(2^{5/4}\sqrt{\pi})$ for $\abs{\lambda_2}\ll\sqrt{\eps}$.
The factor $2$ for large negative $\lambda_2$ is due to the presence of two
saddles.
\end{example}


\subsection{SPDEs}
\label{ssec_SPDE}

Metastability can also be displayed by parabolic stochastic partial differential
equations of the form 
\begin{equation}
 \label{SPDE01}
\partial_t u(t,x) = \partial_{xx} u(t,x) + f(u(t,x)) + \sqrt{2\eps}
\ddot{W}_{tx}\;, 
\end{equation} 
where $\ddot{W}_{tx}$ denotes space-time white noise (see,
e.g.~\cite{Walsh_SPDEs}). We consider here the simplest case where $u(t,x)$
takes values in $\R$, and $x$ belongs to an interval $[0,L]$, with either
periodic or Neumann boundary conditions (b.c.). Equation~\eqref{SPDE01} can be
considered as an infinite-dimensional gradient system, with potential 
\begin{equation}
 \label{SPDE02}
V[u] = \int_0^L \biggbrak{\frac12 u'(x)^2 + U(u(x))}\6x\;, 
\end{equation} 
where $U'(x)=-f(x)$. Indeed, using integration by parts one obtains that the
Fr\'echet derivative of $V$ in the direction $v$ is given by 
\begin{equation}
 \label{SPDE02A}
\dtot{}{\eta} {V[u+\eta v]} \Bigevalat{\eta=0} 
= -\int_0^L \bigbrak{u''(x)+f(u(x))} v(x) \6x\;,
\end{equation} 
which vanishes on stationary solutions of the deterministic system 
$\partial_t u = \partial_{xx} u + f(u)$.

In the case of the double-well potential $U(u)=\frac14 u^4 - \frac12 u^2$, the
equivalent of Arrhenius' law has been proved by Faris and
Jona-Lasinio~\cite{Faris_JonaLasinio82}, based on a large-deviation principle. 
For both periodic and Neumann b.c., $V$
admits two global minima $u_\pm(x)\equiv \pm1$. The relevant saddle between
these solutions depends on the value of $L$. For Neumann b.c.,
it is given by 
\begin{equation}
 \label{SPDE03}
u_0(x) = 
\begin{cases}
0 & \text{if $L\leqs\pi$\;,} \\
\pm \sqrt{\frac{2m}{m+1}} \sn \Bigpar{\frac{x}{\sqrt{m+1}}+\JK(m),m}
& \text{if $L>\pi$\;,}
\end{cases} 
\end{equation} 
where $2\sqrt{m+1}\JK(m)=L$, $\JK$ denotes the elliptic integral of the first
kind, and $\sn$ denotes Jacobi's elliptic sine. There is a pitchfork
bifurcation at $L=\pi$. The exponent in Arrhenius' law
is given by the difference $V[u_0]-V[u_-]$, which can be computed explicitly in
terms of elliptic integrals. 

The prefactor in Kramers' law has been computed by Maier and Stein, for
various b.c., and $L$ bounded away from the bifurcation value
($L=\pi$ for Neumann and Dirichlet b.c., $L=2\pi$ for periodic
b.c.)~\cite{Maier_Stein_PRL_01,Maier_Stein_SPIE_2003,Stein_JSP_04}. 
The basic observation is that the second-order Fr\'echet derivative of $V$ at a
stationary solution $u$ is the quadratic form 
\begin{equation}
 \label{SPDE04A}
(v_1,v_2) \mapsto \pscal{v_1}{Q[u]v_2}\;, 
\end{equation} 
where
\begin{equation}
 \label{SPDE04B}
Q[u]v(x) = -v''(x) - f'(u(x))v(x)\;.  
\end{equation} 
Thus the r\^ole of the eigenvalues of the Hessian is played by the eigenvalues
of the second-order differential operator $Q[u]$, compatible with the given
b.c. For
instance, for Neumann b.c.\ and $L<\pi$, the eigenvalues at the saddle $u_0$
are of the form $-1+(\pi k/L)^2$, $k=0,1,2,\dots$, while the eigenvalues at the
local minimum $u_-$ are given by $2+(\pi k/L)^2$, $k=0,1,2,\dots$. Thus
formally, the prefactor in Kramers' law is given by the ratio of infinite
products
\begin{align}
\nonumber
C &= 2\pi\sqrt{\frac{\prod_{k=0}^\infty\abs{-1+(\pi
k/L)^2}}{\prod_{k=0}^\infty\brak{2+(\pi k/L)^2}}} \\
&= 2\pi\sqrt{\frac12\prod_{k=1}^\infty
\frac{1-(L/\pi k)^2}{1+2(L/\pi k)^2}} 
= 2^{3/4}\pi \sqrt{\frac{\sin L}{\sinh(\sqrt{2}L)}}\;.
 \label{SPDE04}
\end{align}
The determination of $C$ for $L>\pi$ requires the computation of ratios of
spectral determinants, which can be done using path-integral techniques
(Gelfand's method, see
also~\cite{Forman1987,McKane_Tarlie_1995,ColinVerdiere1999} for different
approaches to the computation of spectral determinants). The case of periodic
b.c.\ and $L>2\pi$ is even more difficult, because there is a continuous set of
relevant saddles owing to translation invariance, but can be treated as
well~\cite{Stein_JSP_04}. The formal computations of the prefactor have been
extended to the case of bifurcations $L\sim\pi$, respectively $L\sim2\pi$ for
periodic b.c.\ in~\cite{BG09a}. For instance, for Neumann b.c.\ and $L\leqs\pi$,
the expression~\eqref{SPDE04} of the prefactor has to be replaced by 
\begin{equation}
 \label{SPDE05}
C = \frac{2^{3/4}\pi}{\Psi_+(\lambda_1/\sqrt{3\eps/4L})} 
\sqrt{\frac{\lambda_1+\sqrt{3\eps/4L}}{\lambda_1}}
\sqrt{\frac{\sin L}{\sinh(\sqrt{2}L)}}\;,
\end{equation}
where $\lambda_1=-1+(\pi/L)^2$. Unlike~\eqref{SPDE04}, which vanishes in
$L=\pi$, the above expression converges to a finite value of order $\eps^{1/4}$
as $L\to\pi_-$. 

These results have recently been proved rigorously, by considering sequences of
finite-dimensional systems converging to the SPDE as dimension goes to infinity,
and controlling the dimension-dependence of the error terms. The first step in
this direction was made in~\cite{BBM2010} for the chain of interacting particles
introduced in~\cite{BFG06a}, where a Kramers law with uniform error bounds was
obtained for particular initial distributions. Full proofs of the Kramers law
for classes of parabolic SPDEs have then been obtained in~\cite{BG12a}, using
spectral Galerkin approximations for the converging sequence, and
in~\cite{Barret_2012} using finite-difference approximations.


\subsection{The irreversible case}
\label{ssec_cycling}

Does Kramers' law remain valid for general diffusions of the form
\begin{equation}
 \label{irr01}
\6x_t = f(x_t)\6t + \sqrt{2\eps}\6W_t\;, 
\end{equation} 
in which $f$ is not equal to the gradient of a potential $V$? In general, the
answer is negative. As we remarked before, large-deviation results imply that
Arrhenius' law still holds for such systems. The prefactor, however, can
behave very differently than in Kramers' law. It need not even converge to a
limiting value as $\eps\to0$. 

\begin{figure}
\centerline{\includegraphics*[clip=true,height=50mm]{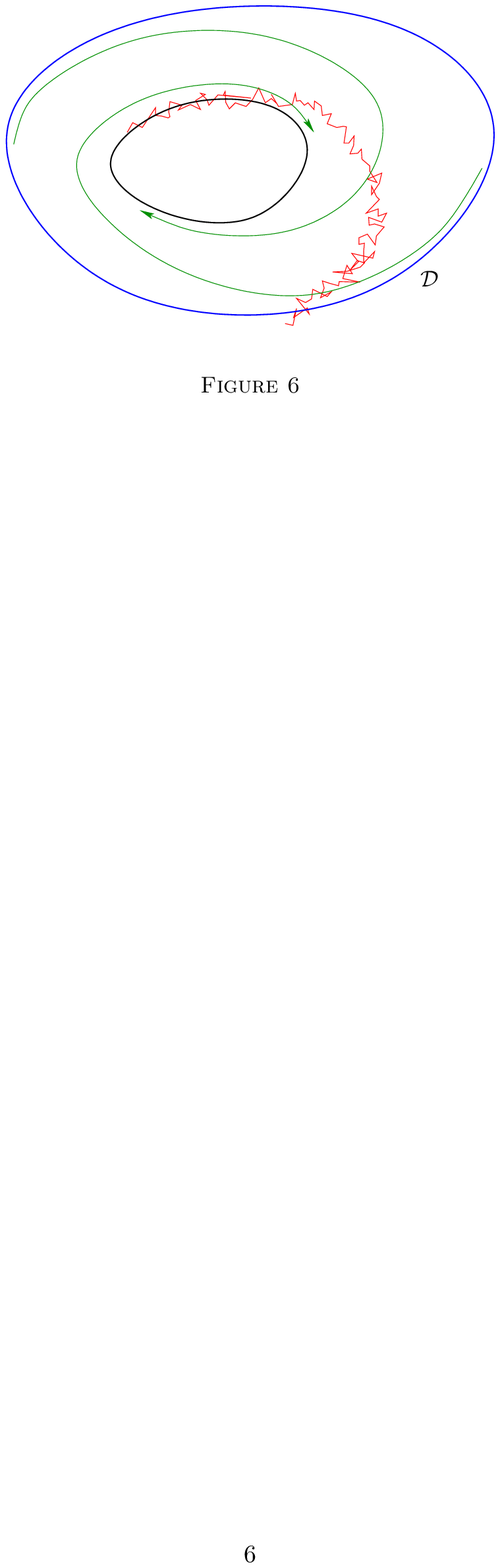}}
 \vspace{2mm}
\caption[]{Two-dimensional vector field with an unstable periodic orbit. The
location of the first exit from the domain $\cD$ delimited by the unstable
orbit displays the phenomenon of cycling. 
}
\label{fig_cycling}
\end{figure}

We discuss here a particular example of such a non-Kramers behaviour, called
\defwd{cycling}. Consider a two-dimensional vector field admitting an unstable
periodic orbit, and let $\cD$ be the interior of the unstable orbit
(\figref{fig_cycling}). Since paths tracking the periodic orbit do not
contribute to the rate function, the quasipotential is constant on
$\partial\cD$, meaning that on the level of large deviations, all points on the
periodic orbit are equally likely to occur as first-exit points. 

Day has discovered the remarkable fact that the distribution of first-exit
locations rotates around $\partial\cD$, by an angle proportional to
$\log\eps$~\cite{Day5,Day6,Day4}. Hence this distribution does not converge to
any limit as $\eps\to0$. 

Maier and Stein provided an intuitive explanation for this phenomenon in terms
of most probable exit paths and WKB-approximations~\cite{MS4}. Even though the
quasipotential is constant on $\partial\cD$, there exists a well-defined path
minimising the rate function (except in case of symmetry-related degeneracies).
This path spirals towards $\partial\cD$, the distance to
the boundary decreasing geometrically at each revolution. One expects that exit
becomes likely as soon as the minimising path reaches a distance of order
$\sqrt{\eps}$ from the boundary, which happens after a number of revolutions of
order $\log\eps$. 

It turns out that the distribution of first-exit locations itself has universal
characteristics:

\begin{theorem}[\cite{BG7,BG12b}]
\label{thm_cycling} 
There exists an explicit parametrisation of $\partial\cD$ by an angle $\theta$
(taking into account the number of revolutions), 
such that the distribution of first-exit locations has a density close to  
\begin{equation}
 \label{cycling01}
p(\theta) = f_{\math{transient}}(\theta)
\frac{\e^{-(\theta-\theta_0)/\lambda\TK}}{\lambda\TK}
P_{\lambda T}(\theta - \log(\eps^{-1}))\;, 
\end{equation} 
where 
\begin{itemiz}
\item	$f_{\math{transient}}(\theta)$ is a transient term, exponentially close
to $1$ as soon as $\theta\gg\abs{\log\eps}$;
\item	$T$ is the period of the unstable orbit, and $\lambda$ is its Lyapunov
exponent;
\item	$\TK = C\eps^{-1/2}\e^{\Vbar/\eps}$ plays the r\^ole of Kramers' time;
\item	the universal periodic function $P_{\lambda T}(\theta)$ is a sum of
shifted Gumbel distributions, given by 
\begin{equation}
 \label{cycling02}
 P_{\lambda T}(\theta) = \sum_{k\in\Z} A(\theta - k\lambda T)\;, 
\qquad
A(x) = \frac12 \e^{-2x-\frac12\e^{-2x}}\;.
\end{equation} 
\end{itemiz}
\end{theorem}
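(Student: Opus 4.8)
The plan is to reduce the problem, in a tube of fixed radius $d_0$ around the unstable orbit $\Gamma=\partial\cD$, to a one-dimensional first-passage problem, and then to combine the resulting \lq\lq inner\rq\rq\ description with \lq\lq outer\rq\rq\ Kramers-type estimates. First I would set up adapted coordinates: Floquet theory applied to the linearisation of $\dot x=f(x)$ along $\Gamma$, followed by near-identity, phase-dependent transformations (and a normal-form step removing the higher-order terms in the transverse variable), yields coordinates $(r,\varphi)$ with $\varphi\in\R/T\Z$ the phase along $\Gamma$ and $r$ a transverse variable, in which the deterministic flow reads $\dot\varphi=1$, $\dot r=\lambda r$, and \eqref{irr01} becomes
\begin{equation*}
\6\varphi_t = \bigpar{1+\Order{r_t}+\Order{\sqrt\eps}}\6t + \sqrt\eps\,(\cdots)\6W_t\;,
\qquad
\6r_t = \lambda r_t\,\6t + \sqrt{2\eps}\,\sigma(\varphi_t)\6W_t + (\text{h.o.t.})\;,
\end{equation*}
with $\sigma$ a $T$-periodic function fixed by the metric and the Floquet data. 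A sample-path estimate shows that, with probability exponentially close to $1$, $\varphi_t$ stays within $\Order{\sqrt\eps\abs{\log\eps}}$ of $\varphi_0+t$; using $\theta=\lambda\varphi$ as the time variable, the transverse equation reduces, to leading order, to the \emph{linear} Langevin equation $\6z_\theta = z_\theta\6\theta + \sqrt{2\eps}\,\hat\sigma(\theta)\6W_\theta$ with $\hat\sigma$ of period $\lambda T$, and the first exit from $\cD$ becomes the first passage of $z_\theta$ through $0$.

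Next I would analyse this linear equation. Writing $z_\theta=\e^{\theta}\bigpar{z_{\theta_0}+\sqrt{2\eps}\,M_\theta}$, with $M_\theta=\int_{\theta_0}^\theta\e^{-s}\hat\sigma(s)\6W_s$ an $L^2$-bounded martingale converging to a Gaussian limit, a direct computation shows that the first-passage phase equals $\log(\eps^{-1})$ up to an $\Order{1}$ constant, plus a random correction with a universal law: the probability of having exited by phase $\theta$, conditioned on the path reaching the tube with the natural phase, is asymptotically of the Gumbel form $\e^{-c\,\e^{-2(\theta-\theta_\ast)}}$, whose derivative is a rescaling of the density $A$ in \eqref{cycling02}. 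The $\lambda T$-periodicity of $\hat\sigma$, together with the passage to the lifted angle $\theta$ that counts revolutions, reassembles the contributions of the successive revolutions into the periodic function $P_{\lambda T}$.

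The two remaining factors come from outer estimates. Before the path first reaches the $d_0$-tube it evolves in the large-deviation regime; since the quasipotential is constant on $\Gamma$ (paths tracking $\Gamma$ cost nothing), the phase at which it first reaches the tube is asymptotically uniform on the $\lambda T$-circle, and the probability per unit time of reaching it at all is $\asymp 1/\TK$, with $\TK=C\eps^{-1/2}\e^{\Vbar/\eps}$ the analogue of Kramers' time (the $\eps^{-1/2}$ reflecting the one-parameter family of exit points along $\Gamma$). A Markov-restart argument --- relaxation back to the interior after each unsuccessful approach, followed by a fresh attempt --- then shows that the number of revolutions completed before exit is approximately geometric, which on the lift is precisely the factor $\e^{-(\theta-\theta_0)/\lambda\TK}/(\lambda\TK)$; the transient prefactor $f_{\math{transient}}(\theta)$ absorbs the dependence on the initial condition and is exponentially close to $1$ after a few revolutions.

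The main obstacle is that, unlike in the reversible case, no potential-theoretic or spectral identity is available, so every estimate must be obtained pathwise, by large-deviation and a priori bounds. The crux is to control the error terms \emph{uniformly} across the matching of the inner regime (linear dynamics, $\Order{\sqrt\eps}$ fluctuations) and the outer regime (Kramers barrier crossing), and in particular to show that the neglected $\Order{r^2}$ and $\Order{\sqrt\eps}$ terms in the transverse and phase equations do not smear the $\log\eps$-periodic structure --- i.e.\ that the sharp Gumbel profile survives the reduction. A secondary point is to check that $P_{\lambda T}$ is, up to the admissible errors, independent of the arbitrary choices of the parametrisation $\theta$ (hence of $\theta_0$) and of the tube radius $d_0$.
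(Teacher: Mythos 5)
The review paper does not actually supply a proof of this theorem: it is stated as a citation to~\cite{BG7,BG12b}, and the surrounding text only records Day's observation that the exit distribution rotates by an angle $\propto\log\eps$, together with the Maier--Stein heuristic (the optimal exit path spirals inward, exit becomes likely at distance $\sqrt{\eps}$ from $\partial\cD$ after $\Order{\abs{\log\eps}}$ revolutions). So there is no internal proof to match your sketch against, and the comparison has to be with the strategy of the cited works.

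Measured against that, your outline is essentially the right one, and it correctly identifies the load-bearing ideas: Floquet/normal-form coordinates $(r,\varphi)$ along the unstable orbit; linearisation of the transverse dynamics to a time-periodic linear SDE whose escape generates the Gumbel profile; an outer Kramers-type estimate for the rate of arrival at a $d_0$-tube around $\Gamma$, producing the $\TK=C\eps^{-1/2}\e^{\Vbar/\eps}$ scale; and a Markov-chain/renewal decomposition over revolutions giving the asymptotically exponential envelope $\e^{-(\theta-\theta_0)/\lambda\TK}/(\lambda\TK)$ and the transient factor. You also correctly flag the two genuine difficulties: there is no reversibility, hence no potential-theoretic or spectral shortcut, so everything must be done pathwise; and the matching between the large-deviation (outer) and diffusive (inner) regimes must be uniform enough not to blur the $\log\eps$-periodic structure.

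Two points where your sketch glosses over real work. First, the Gumbel density $A$ does not come from the first passage \emph{to the orbit} $r=0$ in the way you phrase it (\lq\lq first passage of $z_\theta$ through $0$\rq\rq); $\partial\cD$ is the unstable orbit, and the linearised transverse process is repelled from it, so the relevant event is not a downward hitting but an outward escape: one writes $r_t=\e^{\lambda t}(r_0+\sqrt{2\eps}\,M_t)$ with $M_t$ an $L^2$-bounded martingale, and the escape time is $\approx\lambda^{-1}\log(\const/\abs{M_\infty})$, with $M_\infty$ asymptotically Gaussian; the double-exponential tail of the Gumbel comes from taking the logarithm of a Gaussian, modulated by the period-$\lambda T$ structure. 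Your formula $\e^{-c\e^{-2(\theta-\theta_\ast)}}$ is the right profile, but it is a survival probability for the \emph{escape} event, not a hitting probability. Second, the claim that the phase of first arrival at the tube is \lq\lq asymptotically uniform on the $\lambda T$-circle\rq\rq\ requires justification; the key point in~\cite{BG7,BG12b} is rather that the phase of the relevant Gaussian $M_\infty$ is what gets averaged, and controlling the dependence of the one-period transition kernel on the initial phase is where the uniform error bounds (your $f_{\math{transient}}$) actually come from. Neither of these changes the architecture of your argument, but a complete proof needs them spelled out.
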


Although this result concerns the first-exit location, the first-exit time is
strongly correlated with the first-exit location, and should thus display a
similar behaviour. 

Another interesting consequence of this result is that it allows to determine
the resi\-dence-time distribution of a particle in the wells of a periodically
perturbed double-well potential, and therefore gives a way to quantify the
phenomenon of stochastic resonance~\cite{BG9}.


\subsection{Some recent developments}
\label{ssec_recent}

Since the first version of this review appeared as a preprint, there have been
quite a number of new results related to the Kramers formula, which shows
that this field of research is still very active. Here is a
non-exhaustive list. 

A number of new approaches analyse the generator of the diffusion using
variational methods from the theory of PDEs. In~\cite{MenzSchlichting12}, Menz
and Schlichting prove Kramers' law for the first nonzero eigenvalue of the
generator using Poincar\'e and logarithmic Sobolev inequalities.
In~\cite{PeletierSavareVeneroni}, Peletier, Savar\'e and Veneroni use
$\Gamma$-convergence to obtain a Kramers law from a more realistic
Kramers--Smoluchowski equation, in which particles are described by their
position and their chemical state. See
also~\cite{ArnrichMielkePeletierSavareVeneroni12} for an approach based on the
Wasserstein distance,
and~\cite{HerrmannNiethammer11,HerrmannNiethammerVelazquez12} for related work. 

A situation where saddles are even more degenerate than in the cases considered
in Section~\ref{ssec_nonquad}, due to the existence of a first
integral, has been considered by Bouchet and Touchette
in~\cite{BouchetTouchette12}. 

In~\cite{CerouGuyaderLelievreMalrieu12}, 
C\'erou, Guyader, Leli\`evre and Malrieu
show that the Gumbel distribution, which we have seen governs the first-exit
distribution through an unstable periodic orbit, also describes the length of
the reactive path, that is, the first successful excursion out of a potential
well. 

Concerning irreversible diffusions, semiclassical analysis has been extended to 
the Kra\-mers--Fokker--Planck equation, which describes the motion of a particle
in a potential when inertia is taken into account, that is, without the
assumption that the motion is overdamped. See for
instance~\cite{HerauHitrikSjostrand08,HerauHitrikSjostrand11}. 

One limitation of the results on SPDEs in Section~\ref{ssec_SPDE} is that the
interval length $L$ is fixed. This implies that the transition states are
stationary solutions that change sign only once or twice per period (depending
on the b.c.). Stationary solutions with more sign changes have a higher energy,
and do not contribute to the transition rate. This is no longer true if
$L=L(\eps)$ grows sufficiently fast as $\eps\to0$. Recent results by Otto, Weber
and Westdickenberg~\cite{OttoWeberWestdickenberg13}, who study the Allen--Cahn
equation in that regime, may help to prove a Kramers formula in that case.
In such systems, metastability may be due to the fact that long time spans are
spent on the stable manifold of transition states, see also the discussion
in~\cite{BeckWayne11}.

\small
\bibliography{../../BFG}

\newcommand{\etalchar}[1]{$^{#1}$}
\def\cprime{$'$}
\providecommand{\bysame}{\leavevmode\hbox to3em{\hrulefill}\thinspace}
\providecommand{\MR}{\relax\ifhmode\unskip\space\fi MR }
\providecommand{\MRhref}[2]{%
  \href{http://www.ams.org/mathscinet-getitem?mr=#1}{#2}
}
\providecommand{\href}[2]{#2}
\begin{thebibliography}{AMP{\etalchar{+}}12}

\bibitem[AMP{\etalchar{+}}12]{ArnrichMielkePeletierSavareVeneroni12}
Steffen Arnrich, Alexander Mielke, Mark~A. Peletier, Giuseppe Savar\'e, and
  Marco Veneroni, \emph{Passing to the limit in a {W}asserstein gradient flow:
  from diffusion to reaction}, Calculus of Variations and Partial Differential
  Equations \textbf{44} (2012), 419--454.

\bibitem[Arr89]{Arrhenius}
Svante Arrhenius, J.~Phys.\ Chem. \textbf{4} (1889), 226.

\bibitem[Bar12]{Barret_2012}
Florent Barret, \emph{Sharp asymptotics of metastable transition times for one
  dimensional {SPDE}s}, preprint {\tt arXiv:1201.4440}, 2012.

\bibitem[BBM10]{BBM2010}
Florent Barret, Anton Bovier, and Sylvie M\'el\'eard, \emph{Uniform estimates
  for metastable transition times in a coupled bistable system}, Electron. J.
  Probab. \textbf{15} (2010), 323--345. \MR{2609590 (2011h:82066)}

\bibitem[BEGK04]{BEGK}
Anton Bovier, Michael Eckhoff, V{\'e}ronique Gayrard, and Markus Klein,
  \emph{Metastability in reversible diffusion processes. {I}. {S}harp
  asymptotics for capacities and exit times}, J. Eur. Math. Soc. (JEMS)
  \textbf{6} (2004), no.~4, 399--424. \MR{2094397 (2006b:82112)}

\bibitem[BFG07a]{BFG06a}
Nils Berglund, Bastien Fernandez, and Barbara Gentz, \emph{Metastability in
  interacting nonlinear stochastic differential equations: {I}. {F}rom weak
  coupling to synchronization}, Nonlinearity \textbf{20} (2007), no.~11,
  2551--2581. \MR{2361246 (2009a:60116)}

\bibitem[BFG07b]{BFG06b}
\bysame, \emph{Metastability in interacting nonlinear stochastic differential
  equations~{II}: {L}arge-${N}$ behaviour}, Nonlinearity \textbf{20} (2007),
  no.~11, 2583--2614. \MR{2361247 (2009a:60117)}

\bibitem[BG04]{BG7}
Nils Berglund and Barbara Gentz, \emph{On the noise-induced passage through an
  unstable periodic orbit~{I}: {T}wo-level model}, J.~{Statist.} {Phys.}
  \textbf{114} (2004), 1577--1618.

\bibitem[BG05]{BG9}
\bysame, \emph{Universality of first-passage and residence-time distributions
  in non-adiabatic stochastic resonance}, Europhys. Letters \textbf{70} (2005),
  1--7.

\bibitem[BG09]{BG09a}
\bysame, \emph{Anomalous behavior of the {K}ramers rate at bifurcations in
  classical field theories}, J. Phys. A: Math. Theor \textbf{42} (2009),
  052001. \MR{2525368 (2010m:82053)}

\bibitem[BG10]{BG2010}
\bysame, \emph{The {E}yring--{K}ramers law for potentials with nonquadratic
  saddles}, Markov Processes Relat. Fields \textbf{16} (2010), 549--598.
  \MR{2759772 (2011i:60139)}

\bibitem[BG12a]{BG12b}
\bysame, \emph{On the noise-induced passage through an unstable periodic orbit
  {II}: General case}, submitted, {\tt arXiv/1208.2557}, 2012.

\bibitem[BG12b]{BG12a}
\bysame, \emph{Sharp estimates for metastable lifetimes in parabolic {SPDE}s:
  Kramers' law and beyond}, submitted, {\tt arXiv/1202.0990}, 2012.

\bibitem[BGK05]{BGK}
Anton Bovier, V\'eronique Gayrard, and Markus Klein, \emph{Metastability in
  reversible diffusion processes. {II}. {P}recise asymptotics for small
  eigenvalues}, J. Eur. Math. Soc. (JEMS) \textbf{7} (2005), no.~1, 69--99.
  \MR{2120991 (2006b:82113)}

\bibitem[BM88]{BuslovMakarov1988}
V.~A. Buslov and K.~A. Makarov, \emph{A time-scale hierarchy with small
  diffusion}, Teoret. Mat. Fiz. \textbf{76} (1988), no.~2, 219--230.

\bibitem[BT12]{BouchetTouchette12}
Freddy Bouchet and Hugo Touchette, \emph{Non-classical large deviations for a
  noisy system with non-isolated attractors}, Journal of Statistical Mechanics:
  {T}heory and Experiment \textbf{2012} (2012), no.~05, P05028.

\bibitem[BW11]{BeckWayne11}
Margaret Beck and C.~Eugene Wayne, \emph{Using global invariant manifolds to
  understand metastability in the {B}urgers equation with small viscosity},
  SIAM Rev. \textbf{53} (2011), no.~1, 129--153. \MR{2785882}

\bibitem[CdV99]{ColinVerdiere1999}
Yves Colin~de Verdi{\`e}re, \emph{D\'eterminants et int\'egrales de {F}resnel},
  Ann. Inst. Fourier (Grenoble) \textbf{49} (1999), no.~3, 861--881, Symposium
  {\`a} la M{\'e}moire de Fran{\c{c}}ois Jaeger (Grenoble, 1998). \MR{1703428
  (2001k:58065)}

\bibitem[CGLM12]{CerouGuyaderLelievreMalrieu12}
Fr\'ed\'eric C\'erou, Arnaud Guyader, Tony Leli\`evre, and Florent Malrieu,
  \emph{On the length of one-dimensional reactive paths}, preprint, {\tt
  arXiv:1206.0949}, 2012.

\bibitem[Day83]{Day1}
Martin~V. Day, \emph{On the exponential exit law in the small parameter exit
  problem}, Stochastics \textbf{8} (1983), 297--323.

\bibitem[Day90]{Day5}
\bysame, \emph{Large deviations results for the exit problem with
  characteristic boundary}, J. Math. Anal. Appl. \textbf{147} (1990), no.~1,
  134--153.

\bibitem[Day94]{Day6}
\bysame, \emph{Cycling and skewing of exit measures for planar systems}, Stoch.
  Stoch. Rep. \textbf{48} (1994), 227--247.

\bibitem[Day96]{Day4}
\bysame, \emph{Exit cycling for the van der {P}ol oscillator and quasipotential
  calculations}, J. Dynam. Differential Equations \textbf{8} (1996), no.~4,
  573--601.

\bibitem[Doo84]{Doob1984}
J.~L. Doob, \emph{Classical potential theory and its probabilistic
  counterpart}, Grundlehren der Mathematischen Wissenschaften [Fundamental
  Principles of Mathematical Sciences], vol. 262, Springer-Verlag, New York,
  1984.

\bibitem[DS84]{Doyle_Snell}
Peter~G. Doyle and J.~Laurie Snell, \emph{Random walks and electric networks},
  Carus Mathematical Monographs, vol.~22, Mathematical Association of America,
  Washington, DC, 1984.

\bibitem[DS89]{DS}
Jean-Dominique Deuschel and Daniel~W. Stroock, \emph{Large deviations},
  Academic Press, Boston, 1989, Reprinted by the American Mathematical Society,
  2001.

\bibitem[Dyn65]{Dynkin65}
E.~B. Dynkin, \emph{Markov processes. {V}ols. {I}, {II}}, Academic Press Inc.,
  Publishers, New York, 1965.

\bibitem[DZ98]{DZ}
Amir Dembo and Ofer Zeitouni, \emph{Large deviations techniques and
  applications}, second ed., Applications of Mathematics, vol.~38,
  Springer-Verlag, New York, 1998.

\bibitem[Eyr35]{Eyring}
H.~Eyring, \emph{The activated complex in chemical reactions}, Journal of
  Chemical Physics \textbf{3} (1935), 107--115.

\bibitem[FJL82]{Faris_JonaLasinio82}
William~G. Faris and Giovanni Jona-Lasinio, \emph{Large fluctuations for a
  nonlinear heat equation with noise}, J. Phys. A \textbf{15} (1982), no.~10,
  3025--3055. \MR{684578 (84j:81073)}

\bibitem[For87]{Forman1987}
Robin Forman, \emph{Functional determinants and geometry}, Invent. Math.
  \textbf{88} (1987), no.~3, 447--493. \MR{884797 (89b:58212)}

\bibitem[Fre00]{Freidlin1}
Mark~I. Freidlin, \emph{Quasi-deterministic approximation, metastability and
  stochastic resonance}, Physica~D \textbf{137} (2000), 333--352.

\bibitem[FW98]{FW}
M.~I. Freidlin and A.~D. Wentzell, \emph{Random perturbations of dynamical
  systems}, second ed., Springer-Verlag, New York, 1998. \MR{1652127
  (99h:60128)}

\bibitem[GH94]{GriffithsHarris1994}
Phillip Griffiths and Joseph Harris, \emph{Principles of algebraic geometry},
  Wiley Classics Library, John Wiley \& Sons Inc., New York, 1994, Reprint of
  the 1978 original. \MR{1288523 (95d:14001)}

\bibitem[HHS08]{HerauHitrikSjostrand08}
Fr{\'e}d{\'e}ric H{\'e}rau, Michael Hitrik, and Johannes Sj{\"o}strand,
  \emph{Tunnel effect for {K}ramers-{F}okker-{P}lanck type operators}, Ann.
  Henri Poincar\'e \textbf{9} (2008), no.~2, 209--274. \MR{2399189
  (2009k:35214)}

\bibitem[HHS11]{HerauHitrikSjostrand11}
\bysame, \emph{Tunnel effect and symmetries for {K}ramers-{F}okker-{P}lanck
  type operators}, J. Inst. Math. Jussieu \textbf{10} (2011), no.~3, 567--634.
  \MR{2806463 (2012h:35249)}

\bibitem[HKN04]{HelfferKleinNier04}
Bernard Helffer, Markus Klein, and Francis Nier, \emph{Quantitative analysis of
  metastability in reversible diffusion processes via a {W}itten complex
  approach}, Mat. Contemp. \textbf{26} (2004), 41--85.

\bibitem[HKS89]{HolleyKusuokaStroock1989}
Richard~A. Holley, Shigeo Kusuoka, and Daniel~W. Stroock, \emph{Asymptotics of
  the spectral gap with applications to the theory of simulated annealing}, J.
  Funct. Anal. \textbf{83} (1989), no.~2, 333--347.

\bibitem[HN05]{HelfferNier05}
Bernard Helffer and Francis Nier, \emph{Hypoelliptic estimates and spectral
  theory for {F}okker-{P}lanck operators and {W}itten {L}aplacians}, Lecture
  Notes in Mathematics, vol. 1862, Springer-Verlag, Berlin, 2005.

\bibitem[HN06]{HelfferNier06}
\bysame, \emph{Quantitative analysis of metastability in reversible diffusion
  processes via a {W}itten complex approach: the case with boundary}, M\'emoire
  105, {S}oci\'et\'e {M}ath\'ematique de {F}rance, 2006.

\bibitem[HN11]{HerrmannNiethammer11}
Michael Herrmann and Barbara Niethammer, \emph{Kramers' formula for chemical
  reactions in the context of {W}asserstein gradient flows}, Commun. Math. Sci.
  \textbf{9} (2011), no.~2, 623--635. \MR{2815689 (2012h:35343)}

\bibitem[HNV12]{HerrmannNiethammerVelazquez12}
Michael Herrmann, Barbara Niethammer, and Juan~J.L. Vel\`azquez,
  \emph{Rate-independent dynamics and {K}ramers-type phase transitions in
  nonlocal {F}okker--{P}lanck equations with dynamical control}, preprint {\tt
  arXiv:1212.3128}, 2012.

\bibitem[HS84]{HelfferSjostrand1}
B.~Helffer and J.~Sj{\"o}strand, \emph{Multiple wells in the semiclassical
  limit. {I}}, Comm. Partial Differential Equations \textbf{9} (1984), no.~4,
  337--408.

\bibitem[HS85a]{HelfferSjostrand3}
\bysame, \emph{Multiple wells in the semiclassical limit. {III}. {I}nteraction
  through nonresonant wells}, Math. Nachr. \textbf{124} (1985), 263--313.

\bibitem[HS85b]{HelfferSjostrand2}
\bysame, \emph{Puits multiples en limite semi-classique. {II}. {I}nteraction
  mol\'eculaire. {S}ym\'etries. {P}erturbation}, Ann. Inst. H. Poincar\'e Phys.
  Th\'eor. \textbf{42} (1985), no.~2, 127--212.

\bibitem[HS85c]{HelfferSjostrand4}
\bysame, \emph{Puits multiples en m\'ecanique semi-classique. {IV}. \'{E}tude
  du complexe de {W}itten}, Comm. Partial Differential Equations \textbf{10}
  (1985), no.~3, 245--340.

\bibitem[Kak45]{Kakutani1945}
Shizuo Kakutani, \emph{Markoff process and the {D}irichlet problem}, Proc.
  Japan Acad. \textbf{21} (1945), 227--233 (1949).

\bibitem[KM96]{KolokoltsovMakarov1996}
Vassili~N. Kolokol{\cprime}tsov and Konstantin~A. Makarov, \emph{Asymptotic
  spectral analysis of a small diffusion operator and the life times of the
  corresponding diffusion process}, Russian J. Math. Phys. \textbf{4} (1996),
  no.~3, 341--360.

\bibitem[Kol00]{Kolokoltsov00}
Vassili~N. Kolokoltsov, \emph{Semiclassical analysis for diffusions and
  stochastic processes}, Lecture Notes in Mathematics, vol. 1724,
  Springer-Verlag, Berlin, 2000.

\bibitem[Kra40]{Kramers}
H.~A. Kramers, \emph{Brownian motion in a field of force and the diffusion
  model of chemical reactions}, Physica \textbf{7} (1940), 284--304.
  \MR{0002962 (2,140d)}

\bibitem[LP10]{LePeutrec10}
Dorian Le~Peutrec, \emph{Small eigenvalues of the {N}eumann realization of the
  semiclassical {W}itten {L}aplacian}, Ann. Fac. Sci. Toulouse Math. (6)
  \textbf{19} (2010), no.~3-4, 735--809. \MR{2790817 (2012c:58042)}

\bibitem[LP11]{LePeutrec11}
\bysame, \emph{Small eigenvalues of the {W}itten {L}aplacian acting on
  {$p$}-forms on a surface}, Asymptot. Anal. \textbf{73} (2011), no.~4,
  187--201. \MR{2859124 (2012i:58025)}

\bibitem[LPNV12]{LePeutrecNierViterbo12}
Dorian Le~Peutrec, Francis Nier, and Claude Viterbo, \emph{Precise {A}rrhenius
  law for p-forms: The {W}itten {L}aplacian and {M}orse--{B}arannikov complex},
  Annales Henri Poincar\'e (2012), 1--44.

\bibitem[Mat95]{Mathieu1995}
Pierre Mathieu, \emph{Spectra, exit times and long time asymptotics in the
  zero-white-noise limit}, Stochastics Stochastics Rep. \textbf{55} (1995),
  no.~1-2, 1--20.

\bibitem[Mic95]{Miclo1995}
Laurent Miclo, \emph{Comportement de spectres d'op\'erateurs de {S}chr\"odinger
  \`a basse temp\'erature}, Bull. Sci. Math. \textbf{119} (1995), no.~6,
  529--553.

\bibitem[MS93]{MS3}
Robert~S. Maier and Daniel~L. Stein, \emph{Escape problem for irreversible
  systems}, Phys. Rev. E \textbf{48} (1993), no.~2, 931--938.

\bibitem[MS96]{MS4}
\bysame, \emph{Oscillatory behavior of the rate of escape through an unstable
  limit cycle}, Phys. Rev. Lett. \textbf{77} (1996), no.~24, 4860--4863.

\bibitem[MS97]{MS1}
\bysame, \emph{Limiting exit location distributions in the stochastic exit
  problem}, SIAM J. Appl. Math. \textbf{57} (1997), 752--790.

\bibitem[MS01]{Maier_Stein_PRL_01}
\bysame, \emph{Droplet nucleation and domain wall motion in a bounded
  interval}, Phys. Rev. Lett. \textbf{87} (2001), 270601--1.

\bibitem[MS03]{Maier_Stein_SPIE_2003}
\bysame, \emph{The effects of weak spatiotemporal noise on a bistable
  one-dimensional system}, Noise in complex systems and stochastic dynamics
  (L.~Schimanski-Geier, D.~Abbott, A.~Neimann, and C.~Van~den Broeck, eds.),
  SPIE Proceedings Series, vol. 5114, 2003, pp.~67--78.

\bibitem[MS12]{MenzSchlichting12}
Georg Menz and Andr\'e Schlichting, \emph{{P}oincar\'e and logarithmic
  {S}obolev inequalities by decomposition of the energy landscape}, preprint
  {\tt arXiv:1202.1510}, 2012.

\bibitem[MT95]{McKane_Tarlie_1995}
A.~J. McKane and M.B. Tarlie, \emph{Regularization of functional determinants
  using boundary conditions}, J. Phys. A \textbf{28} (1995), 6931--6942.
  \MR{1381151 (97i:58180)}

\bibitem[{\O}ks85]{Oeksendal}
Bernt {\O}ksendal, \emph{Stochastic differential equations}, Springer-Verlag,
  Berlin, 1985.

\bibitem[OWW13]{OttoWeberWestdickenberg13}
Felix Otto, Hendrik Weber, and Maria Westdickenberg, \emph{Invariant measure of
  the stochastic {A}llen-{C}ahn equation: the regime of small noise and large
  system size}, preprint {\tt arXiv:1301.0408}, 2013.

\bibitem[PSV12]{PeletierSavareVeneroni}
Mark~A. Peletier, Giuseppe Savar{\'e}, and Marco Veneroni, \emph{Chemical
  reactions as {$\Gamma$}-limit of diffusion}, SIAM Rev. \textbf{54} (2012),
  no.~2, 327--352. \MR{2916310}

\bibitem[Sch66]{Schilder1966}
M.~Schilder, \emph{Some asymptotic formulas for {W}iener integrals}, Trans.
  Amer. Math. Soc. \textbf{125} (1966), 63--85.

\bibitem[SM79]{SchussMatkowsky1979}
Zeev Schuss and Bernard~J. Matkowsky, \emph{The exit problem: a new approach to
  diffusion across potential barriers}, SIAM J. Appl. Math. \textbf{36} (1979),
  no.~3, 604--623.

\bibitem[Ste04]{Stein_JSP_04}
Daniel~L. Stein, \emph{Critical behavior of the {K}ramers escape rate in
  asymmetric classical field theories}, J. Stat. Phys. \textbf{114} (2004),
  1537--1556.

\bibitem[Ste05]{Stein04}
\bysame, \emph{Large fluctuations, classical activation, quantum tunneling, and
  phase transitions}, Braz. J. Phys. \textbf{35} (2005), 242--252.

\bibitem[Sug95]{Sugiura95}
Makoto Sugiura, \emph{Metastable behaviors of diffusion processes with small
  parameter}, J. Math. Soc. Japan \textbf{47} (1995), no.~4, 755--788.

\bibitem[Sug96]{Sugiura96a}
\bysame, \emph{Exponential asymptotics in the small parameter exit problem},
  Nagoya Math. J. \textbf{144} (1996), 137--154.

\bibitem[Sug01]{Sugiura01}
\bysame, \emph{Asymptotic behaviors on the small parameter exit problems and
  the singularly perturbation problems}, Ryukyu Math. J. \textbf{14} (2001),
  79--118.

\bibitem[Szn98]{Sznitman_book_1998}
Alain-Sol Sznitman, \emph{Brownian motion, obstacles and random media},
  Springer Monographs in Mathematics, Springer-Verlag, Berlin, 1998.

\bibitem[Ven73]{Wentzell1973}
A.~D. Ventcel{\cprime}, \emph{Formulas for eigenfunctions and eigenmeasures
  that are connected with a {M}arkov process}, Teor. Verojatnost. i Primenen.
  \textbf{18} (1973), 3--29.

\bibitem[VF69]{VF69}
A.~D. Ventcel{\cprime} and M.~I. Fre\u{\i}dlin, \emph{Small random
  perturbations of a dynamical system with stable equilibrium position}, Dokl.
  Akad. Nauk SSSR \textbf{187} (1969), 506--509. \MR{0249795 (40 \#3036)}

\bibitem[VF70]{VF70}
\bysame, \emph{Small random perturbations of dynamical systems}, Uspehi Mat.
  Nauk \textbf{25} (1970), no.~1 (151), 3--55.

\bibitem[Wal86]{Walsh_SPDEs}
John~B. Walsh, \emph{An introduction to stochastic partial differential
  equations}, \'{E}cole d'\'et\'e de probabilit\'es de {S}aint-{F}lour,
  {XIV}---1984, Lecture Notes in Math., vol. 1180, Springer, Berlin, 1986,
  pp.~265--439.

\end{thebibliography}
\bibliographystyle{amsalpha}               

\newpage

\tableofcontents

\bigskip\bigskip\noindent
{\small 
Nils Berglund \\ 
Universit\'e d'Orl\'eans, Laboratoire {\sc Mapmo} \\
{\sc CNRS, UMR  7349} \\
F\'ed\'eration Denis Poisson, FR 2964 \\
B\^atiment de Math\'ematiques, B.P. 6759\\
45067~Orl\'eans Cedex 2, France \\
{\it E-mail address: }{\tt nils.berglund@univ-orleans.fr}
}


\end{document}